\newtheorem{theorem}{Theorem}[section]
\newtheorem{lemma}[theorem]{Lemma}
\newtheorem{conjecture}[theorem]{Conjecture}
\newtheorem{m-theorem}{Main theorem}
\newtheorem{problem}[theorem]{Problem}
\newtheorem{proposition}[theorem]{Proposition}
\newtheorem{corollary}[theorem]{Corollary}
\theoremstyle{remark}
\newtheorem{remark}[theorem]{Remark}
\newtheorem*{theorem*}{Theorem}
\newtheorem*{m-theorem*}{Main theorem}
\newtheorem*{lemma*}{Lemma}
\newtheorem*{proposition*}{Proposition}
\newtheorem*{definition*}{Definition}
\newtheorem*{corollary*}{Corollary}
\newtheorem*{remark*}{Remark}
\newtheorem*{problem*}{Problem}
\newtheorem*{conjecture*}{Conjecture}
\numberwithin{equation}{section}
\newcommand{\R}{\mathbb{R}}
\DeclareMathOperator{\dia}{diam}
\DeclareMathOperator{\Iso}{Isom}
\DeclareMathOperator{\past}{past}
\DeclareMathOperator{\Hess}{Hess}
\newcommand{\nb}[1]{(\mathrm{#1})}
\begin{document}

\title[Properly 
discontinuous  isometric 
group actions  ]{Properly 
discontinuous  isometric \\
group actions on 
inhomogeneous \\
Lorentzian manifolds}

\author{Jun-ichi Mukuno}
\address{Graduate School of Mathematics, 
Nagoya University, 
Chikusaku, Nagoya, 464-8602
Japan}
\email{m08043e@math.nagoya-u.ac.jp}


\subjclass[2010]{Primary 53C50; Secondary 57S30}

\date{\today}


\keywords{Lorentzian geometry,  isometry group, properly discontinuous action}
\begin{abstract}
In the present paper, we prove that  no infinite group  
acts isometrically,  effectively, and properly discontinuously on a certain class of Lorentzian manifolds 
that are not necessarily homogeneous. 
\end{abstract}
\maketitle
\section{Introduction.}
Let us start with reminding ourselves what is called  
the existence problem of 
a compact Clifford--Klein form, in which 
$M$ is  a homogeneous space $G/H$ 
with $G$ being a Lie group and $H$ a closed subgroup of $G$. 
\begin{problem}
Does there exist a  subgroup $\Gamma$ of $G$ such  that 
the restricted action of $\Gamma$ on $M$  is properly discontinuous 
(namely,  that for any compact subset $K \subset M$, only finitely many  elements 
$\phi$  of $\Gamma$ 
satisfy $\phi (K) \cap K \neq \emptyset$), 
and that  
the  quotient manifold $\Gamma \backslash M$, called a 
Clifford--Klein form, 
is compact? 
\end{problem} 
Whether the answer to the problem is affirmative  
depends on the choice of the pair $(G,\,H)$. 
For instance,  if $G$ is semisimple, and if $H$ is a maximal compact subgroup of $G$, 
the quotient space $G/H$ carries the structure of a Riemannian symmetric space. 
Borel--Harish-Chandra~\cite{MR0147566},  Mostow--Tamagawa~\cite{MR0141672}, 
and  Borel~\cite{Borel}  
proved that \emph{the Riemannian symmetric space $G/H$ 
always admits compact  Clifford--Klein forms.}  
In the case where $G= O(n+1,\,1)$ and  $H=O(n,\,1)$, no compact 
Clifford--Klein form of the space $G/H$ exists, as 
the following theorem indicates.  
\begin{theorem}[Calabi--Markus~\cite{Ca}]\label{tm:Ca-Ma}
There is no infinite subgroup of the Lorentz group $O(n+1,\,1)$ whose 
 restricted action on the space $O(n+1,\,1)/O(n,\,1)$ is  properly discontinuous. 
\end{theorem}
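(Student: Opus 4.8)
The plan is to replace the abstract homogeneous space by a concrete linear model and then to exhibit a single compact set that every group element must intersect with its own image. First I would realize $O(n+1,1)/O(n,1)$ as the one-sheeted pseudo-sphere
\[
X=\{x\in\R^{n+2} : Q(x)=1\},\qquad Q(x)=x_1^2+\dots+x_{n+1}^2-x_{n+2}^2,
\]
with associated bilinear form $B$; this is $(n+1)$-dimensional de Sitter space. The linear action of $O(n+1,1)$ on $\R^{n+2}$ preserves $Q$, hence preserves $X$, and by Witt's theorem it is transitive on $X$. The stabilizer of the spacelike point $e_1=(1,0,\dots,0)\in X$ consists of the orthogonal transformations of $e_1^{\perp}$, on which $Q$ has signature $(n,1)$; thus the stabilizer is a copy of $O(n,1)$ and $X\cong O(n+1,1)/O(n,1)$ equivariantly. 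It therefore suffices to prove that every subgroup $\Gamma\leq O(n+1,1)$ acting properly discontinuously on $X$ is finite.

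Next I would single out the ``waist'' of $X$, namely
\[
S=X\cap e_{n+2}^{\perp}=\{x\in X : x_{n+2}=0\}.
\]
Since $e_{n+2}^{\perp}=\R^{n+1}\times\{0\}$ is a positive-definite subspace, $S$ is exactly its unit sphere; in particular $S$ is non-empty and compact. Because $O(n+1,1)$ preserves $B$, for any $g\in O(n+1,1)$ we have $g\cdot S=X\cap(g\,e_{n+2})^{\perp}$, whence
\[
S\cap g\cdot S=X\cap e_{n+2}^{\perp}\cap(g\,e_{n+2})^{\perp}=X\cap W^{\perp},\qquad W=\operatorname{span}\bigl(e_{n+2},\,g\,e_{n+2}\bigr).
\]

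The one genuinely useful observation is a triviality of Lorentzian linear algebra: the orthogonal complement of a timelike vector is positive-definite. Indeed $W\supseteq\R e_{n+2}$ forces $W^{\perp}\subseteq e_{n+2}^{\perp}=\R^{n+1}\times\{0\}$, so $Q$ is positive-definite on $W^{\perp}$; and $\dim W^{\perp}=(n+2)-\dim W\geq n\geq 1$ since $\dim W\leq 2$. Hence $W^{\perp}\neq\{0\}$, and for any non-zero $u\in W^{\perp}$ the vector $u/\sqrt{Q(u)}$ lies in $X\cap W^{\perp}=S\cap g\cdot S$. Therefore $S\cap g\cdot S\neq\emptyset$ for \emph{every} $g\in O(n+1,1)$.

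To conclude: if $\Gamma\leq O(n+1,1)$ acts properly discontinuously on $X$, then applying the definition to the compact set $K=S$ shows that $\{\gamma\in\Gamma : \gamma(S)\cap S\neq\emptyset\}$ is finite; but by the previous step this set is all of $\Gamma$, so $\Gamma$ is finite. I do not expect a serious obstacle here: once the pseudo-sphere model is in hand the proof is a few lines of linear algebra, and the only point deserving care is checking non-emptiness of $W^{\perp}$ --- this is precisely where the numerology matters, and it reflects the fact that $O(n+1,1)$ and $O(n,1)$ have the same real rank, so that the Calabi--Markus phenomenon occurs (the same scheme would break down for a pair with a strictly larger rank gap, where compact Clifford--Klein forms can exist). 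One could alternatively deduce the statement from Kobayashi's general properness criterion, but the direct computation above is cleaner for this case.
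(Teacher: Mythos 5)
Your argument is correct, and it is essentially the classical Calabi--Markus proof. Note that the paper itself does not prove this statement: Theorem~\ref{tm:Ca-Ma} is quoted from Calabi--Markus as background, so there is no in-paper proof to compare against line by line. What the paper does instead is generalize the statement (Proposition~\ref{pr_11} and the main theorem), and its strategy there is exactly the template you use here: exhibit a single compact set $K$ with $\phi(K)\cap K\neq\emptyset$ for \emph{every} isometry $\phi$, and then invoke the definition of proper discontinuity. In your constant-curvature model this compact set is the waist sphere $S=X\cap e_{n+2}^{\perp}$, and the intersection property reduces to the linear-algebra fact that $W^{\perp}$ is a nonzero positive-definite subspace for $W=\operatorname{span}(e_{n+2},ge_{n+2})$ (your dimension count $\dim W^{\perp}\geq n\geq 1$ is where the rank condition enters, and correctly excludes the degenerate case $n=0$, where the hyperbola $O(1,1)/O(0,1)$ does admit an infinite properly discontinuous group). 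In the paper's inhomogeneous setting no such totally geodesic compact waist is available, so the role of your one-line linear algebra is played by the spacelike-geodesic length estimates of Section~\ref{la:la_2} and the diameter-growth comparison, with $K=\pi_{\R}^{-1}([-T_2,T_2])$ a thickened slab rather than a single slice. So your proof buys brevity and complete explicitness in the homogeneous case, while the paper's machinery buys applicability to metrics $-dt^2+g_t$ of variable curvature; the two are consistent, and your write-up has no gaps.
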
 
We should  remark that, although the  group $O(n+1,\,1)$ is enriched with co-compact lattices, 
none of them  acts properly discontinuously on  $O(n+1,\,1)/O(n,\,1)$ 
via the left action. 
By Wolf~\cite{Wo}, Kulkarni~\cite{Kul_2}, and Kobayashi~\cite{Kob_2}, 
the theorem of Calabi--Markus  
was  generalized to a certain class of homogeneous spaces.

We say that the \emph{Calabi--Markus phenomenon} occurs in a  pseudo-Riemannian manifold  
if no groups but finite ones can act  isometrically, effectively, and properly discontinuously on it. 
We should notice that  $O(n+1,\,1)/O(n,\,1)$ is the 
so-called de Sitter space, 
which is  a geodesically complete simply connected Lorentzian manifold 
of positive constant sectional curvature. 

Kobayashi~\cite{Kob_7} proposed the following conjecture: 
\begin{conjecture}\label{con:Kob}
Let $M$ be a geodesically complete pseudo-Riemannian manifold of signature $(p,q)$ with $p \geq q >0$. 
Suppose that we have  a positive lower  bound on the sectional curvature of $M$. 
Then, 
\begin{enumerate}
\item[$\nb{i}$] $M$ is never compact. 
\item[$\nb{ii}$] if  $p+q \geq 3$, the fundamental group of $M$ is always finite.
\end{enumerate}
\end{conjecture}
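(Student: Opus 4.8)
\emph{The plan is to show that the curvature hypothesis secretly forces constant curvature, and then to reduce to the homogeneous Calabi--Markus picture.} The first step is to feed the bound into the rigidity of indefinite curvature tensors: for index $\ge 1$ and dimension $\ge 3$, a one-sided bound on the sectional curvatures of non-degenerate $2$-planes at a point forces the curvature tensor there to be of constant-curvature type (Kulkarni's theorem on the values of sectional curvature in indefinite metrics --- the timelike planes approaching a null plane are what make a one-sided bound so restrictive). Schur's lemma then makes the pointwise constants agree on the connected manifold $M$, so $M$ has globally constant sectional curvature, and it is positive since it is bounded below by some $c>0$; after rescaling, assume the value is $1$. (Signature $(1,1)$ escapes this and is postponed.)

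Since $M$ is geodesically complete, the generalized Killing--Hopf theorem \cite{Wo} identifies the pseudo-Riemannian universal cover $\widetilde M$ with the simply connected space form $S^{p,q}$ of curvature $1$ and signature $(p,q)$, namely the pseudo-sphere $\{\,v\in\R^{p+q+1}:\langle v,v\rangle=1\,\}$ for a form of signature $(p+1,q)$, on which $\Iso(S^{p,q})=O(p+1,q)$ acts. When $p+q\ge 3$ and $p\ge q$ one has $p\ge 2$, so $S^{p,q}\simeq S^{p}$ is simply connected and $\widetilde M=S^{p,q}$; thus $M=\Gamma\backslash S^{p,q}$, where $\Gamma=\pi_1(M)\le O(p+1,q)$ is discrete and acts freely and properly discontinuously. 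In the Lorentzian case $(p,q)=(n-1,1)$ this is $M=\Gamma\backslash dS^{n}$ with $dS^{n}$ the de Sitter $n$-space.

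Now the model space already carries the Calabi--Markus phenomenon: in the Lorentzian case, reindexing Theorem~\ref{tm:Ca-Ma} (replace $n$ by $n-1$) shows that no infinite subgroup of $O(n,1)$ acts properly discontinuously on $dS^{n}=O(n,1)/O(n-1,1)$, and for general signature with $p\ge q$ this is the generalization of Kobayashi \cite{Kob_2}, the mechanism being the equality of real ranks $\Rrk O(p+1,q)=\Rrk O(p,q)=q$. Applying it to the deck group shows $\Gamma=\pi_1(M)$ is finite, which is (ii); and since a finite quotient of the non-compact $S^{p,q}$ is non-compact, $M$ is non-compact, which is (i) when $p+q\ge 3$. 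To get the stronger statement that \emph{no} infinite group acts isometrically, effectively and properly discontinuously on $M$ (the inhomogeneous phenomenon of the abstract), one takes such an action of a group $H$, lifts it to the universal cover to obtain $\widetilde H\le O(p+1,q)$ fitting in $1\to\Gamma\to\widetilde H\to H\to 1$, and checks by a counting argument over $\Gamma$-cosets --- using proper discontinuity of $H$ on $M$ and of the deck group $\Gamma$ on $\widetilde M$, with no compactness anywhere --- that $\widetilde H$ still acts properly discontinuously on $S^{p,q}$; it is then finite, hence so is $H$. Finally, for (i) in signature $(1,1)$: a closed surface carrying a Lorentzian metric has $\chi=0$, so Gauss--Bonnet forces $\int_M K\,dA_g=0$, contradicting $K\ge c>0$ (pass to the oriented double cover if $M$ is not orientable); hence $M$ is non-compact there too.

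\emph{The main obstacle} I anticipate is not a single deep estimate but two soft spots in this reduction. The first is the lift of a possibly \emph{non-free} properly discontinuous action from the inhomogeneous $M$ to the homogeneous model: this is the step that actually earns the word ``inhomogeneous'', and since neither $M$ nor the quotient is assumed compact, the finiteness of $\{\,\widetilde h\in\widetilde H:\widetilde h\widetilde K\cap\widetilde K\ne\emptyset\,\}$ for compact $\widetilde K$ must be extracted coset-by-coset purely from proper discontinuity, through the extension $1\to\Gamma\to\widetilde H\to H\to 1$. The second is the dimension-two case, where the curvature rigidity genuinely fails and one is forced to argue via Gauss--Bonnet rather than via the model space. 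Everything else is the assembly of the pointwise curvature rigidity, Schur's lemma, the Killing--Hopf theorem, and Theorem~\ref{tm:Ca-Ma}.
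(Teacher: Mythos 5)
Be aware that the statement you were asked to prove is stated in the paper as a \emph{conjecture} (Kobayashi's), not as a theorem: the paper supplies no proof of it, only the observation that the constant-curvature case is settled by Calabi--Markus, Wolf, and Kulkarni, and that Kulkarni's rigidity theorem \cite{MR522040} makes the curvature hypothesis collapse to constant curvature whenever $\dim M\geq 3$. Your proposal is precisely a careful assembly of that observation, and as far as I can see it is correct: Kulkarni's theorem plus Schur gives constant positive curvature in dimension $\geq 3$; completeness and the Killing--Hopf classification \cite{Wo} identify the universal cover with the pseudo-sphere $S^{p,q}$, which is simply connected exactly because $p\geq q$ and $p+q\geq 3$ force $p\geq 2$; and the deck group is then an infinite-or-finite discrete subgroup of $O(p+1,q)$ acting properly discontinuously on $O(p+1,q)/O(p,q)$, which is finite by Theorem~\ref{tm:Ca-Ma} in the Lorentzian case and by the rank criterion of \cite{Kul_2}, \cite{Kob_2} in general, since both groups have real rank $q$. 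This yields $\nb{ii}$, and $\nb{i}$ for $p+q\geq 3$ because a finite covering of a compact space is compact while $S^{p,q}\simeq S^{p}\times\R^{q}$ is not.

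Two remarks on scope. First, your Gauss--Bonnet treatment of $\nb{i}$ in signature $(1,1)$ (a closed Lorentzian surface has $\chi=0$, hence $\int_M K\,dA=0$, incompatible with $K\geq c>0$) is the one piece of the conjecture that the paper's discussion does not address at all; it is correct and genuinely needed, since Kulkarni's rigidity fails in dimension two. Second, the final portion of your argument --- lifting an arbitrary properly discontinuous isometric action through $1\to\Gamma\to\widetilde H\to H\to 1$ to conclude that no infinite group acts --- proves something stronger than the conjecture asks (the conjecture concerns only compactness and $\pi_1$), and it is also the sketchiest step as you yourself note; for the statement actually posed it can simply be deleted. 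The honest conclusion to draw, and the one the paper itself draws, is that in dimension $\geq 3$ the hypotheses of Conjecture~\ref{con:Kob} are so rigid that the ``conjecture'' reduces to known theorems about space forms, which is exactly why the body of the paper replaces the curvature hypothesis by the weaker condition $(\mathrm{H})_{t_0,c}$ on inhomogeneous manifolds.
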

Due to Calabi--Markus~\cite{Ca}, Wolf~\cite{Wo}, and Kulkarni~\cite{Kul_2}, 
the conjecture is true in the case where $M$ has constant curvature. 
The conjecture is analogous to the   theorem of Myers  in Riemannian geometry,  
while it  may be interpreted to concern whether the Calabi-Markus phenomenon occurs in 
a class of pseudo-Riemannian manifolds  of variable curvature.  
However, KulKarni~\cite{MR522040}  showed  the following theorem: 
\begin{theorem}[KulKarni~\cite{MR522040}]
Let $M$ be  a connected pseudo-Riemannian manifold with an indefinite metric of dimension $n \geq 3 $. 
If the sectional curvature of $M$ is either bounded from above or below, 
then $M$ is of constant curvature. 
\end{theorem}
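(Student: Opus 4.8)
The plan is to localise the statement to a single tangent space and then globalise with Schur's Lemma. Fix $p\in M$, put $V:=T_pM$ with its indefinite scalar product $\langle\cdot,\cdot\rangle$, let $R$ be the curvature operator at $p$, and set
\[
N(x,y):=\langle R(x,y)y,x\rangle,\qquad Q(x,y):=\langle x,x\rangle\langle y,y\rangle-\langle x,y\rangle^{2},
\]
so that the sectional curvature of a non-degenerate plane $\mathrm{span}(x,y)$ is $N(x,y)/Q(x,y)$. Both $N$ and $Q$ are quadratic separately in $x$ and in $y$, both satisfy the curvature symmetries, and both descend to $V/\mathbb{R}x$ and to $V/\mathbb{R}y$. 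Since replacing the metric by its negative reverses the sign of every sectional curvature, we may assume the sectional curvature of $M$ is bounded above; hence at $p$ the function $(x,y)\mapsto N(x,y)/Q(x,y)$ is bounded above on $\{Q\neq0\}\subset V\times V$.

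The core pointwise claim is that $N=c\,Q$ identically on $V\times V$ for some constant $c=c(p)$. Since $n\ge 3$ and the metric is indefinite, at least one of its two indices is $\ge 2$; accordingly one takes for $C$ the set of spacelike (resp.\ timelike) vectors, which is an open connected cone (it deformation retracts onto a sphere of positive dimension) and whose members have orthogonal complement carrying an \emph{indefinite} non-degenerate form of dimension $n-1\ge 2$. Fix $x\in C$. Then $V=\mathbb{R}x\oplus x^{\perp}$, the form $Q(x,\cdot)$ on $x^{\perp}$ equals $\langle x,x\rangle$ times the restriction $h$ of $\langle\cdot,\cdot\rangle$, and $q_x:=N(x,\cdot)$ is a quadratic form on $x^{\perp}$ with $q_x/h$ bounded on one side wherever $h\neq0$. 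The key algebraic input is: \emph{if $h$ is a non-degenerate indefinite quadratic form on a real space of dimension $\ge 2$ and $q$ is a quadratic form with $q/h$ bounded on one side on $\{h\neq 0\}$, then $q$ is a constant multiple of $h$.} One proves this by putting the pencil $(q,h)$ into simultaneous canonical form: whenever two canonical axes of opposite $h$-sign carry different ``$h$-eigenvalues'', one can let $h\to 0^{\pm}$ while keeping $q$ bounded away from $0$, so $q/h$ is unbounded on both sides; the Jordan blocks occurring when the pencil is not diagonalisable behave the same way; the remaining possibility is exactly $q=\lambda h$. Hence $N(x,y)=\lambda(x)\,Q(x,y)$ for all $y\in V$ and all $x\in C$.

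It remains to see that $\lambda$ is constant and to pass back to $R$. Since $N$ and $Q$ are symmetric, $N(x,y)/Q(x,y)=\lambda(x)$ and likewise $=\lambda(y)$ whenever $x,y\in C$ and $\mathrm{span}(x,y)$ is non-degenerate; as $C$ is open and connected and any two of its points are joined by a finite chain of such admissible pairs, $\lambda\equiv c$ is constant on $C$. Thus $N(x,y)=c\,Q(x,y)$ for every $x\in C$ and every $y\in V$, and since this is a polynomial identity on $V\times V$ holding on the non-empty open set $C\times V$, it holds identically. Finally, an algebraic curvature tensor is determined by the quadratic form $(x,y)\mapsto\langle R(x,y)y,x\rangle$ (the standard polarisation argument, valid in characteristic $0$), and the constant-curvature model tensor $R_0$ produces exactly $Q$; therefore $R_p=c(p)\,R_0$. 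This holds at every point, so $R=c\cdot R_0$ for a function $c\colon M\to\mathbb{R}$, and the second Bianchi identity (Schur's Lemma), whose proof is insensitive to signature, forces $c$ to be locally constant, hence constant since $M$ is connected and $n\ge 3$. Therefore $M$ has constant sectional curvature.

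I expect the main obstacle to be the algebraic lemma on one-sidedly bounded ratios of quadratic forms in indefinite signature, together with the verification that the hypothesis $n\ge 3$ is precisely what guarantees a non-null vector with indefinite orthogonal complement (and a connected cone of such vectors); once that is in hand, the remainder is bookkeeping with the curvature symmetries plus a routine Zariski-density and Schur argument.
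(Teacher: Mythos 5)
The paper does not prove this statement: it is quoted as a known theorem of Kulkarni and used only to motivate weakening the curvature hypothesis in the conjecture, so there is no in-paper proof to compare yours against. Judged on its own, your argument is essentially Kulkarni's original one and its architecture is sound: reduce to ``bounded above'' by flipping the sign of the metric, prove the pointwise identity $N=c\,Q$, recover $R_p=c(p)\,R_0$ by polarisation, and finish with the pseudo-Riemannian Schur lemma (which is where $n\ge 3$ enters a second time). Your identification of the crux --- that a quadratic form $q$ with $q/h$ one-sidedly bounded against a nondegenerate \emph{indefinite} form $h$ of dimension $\ge 2$ must be proportional to $h$ --- is correct, and your observation that $n\ge 3$ is exactly what produces a connected cone of non-null vectors $x$ with $x^{\perp}$ indefinite is the right use of the dimension hypothesis. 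The one place I would press you is the proof of that algebraic lemma: the simultaneous canonical form of a real symmetric pencil over an indefinite $h$ is more delicate than your sketch suggests (several types of non-diagonalisable blocks occur), and the case analysis is avoidable. A cleaner route: if $v_0\neq 0$ is $h$-null then $dh_{v_0}=2h(v_0,\cdot)\neq 0$ by nondegeneracy, so $h$ takes both signs in every neighbourhood of $v_0$; hence if $q(v_0)\neq 0$ the ratio $q/h$ is unbounded above \emph{and} below near $v_0$, and one-sided boundedness forces $q$ to vanish on the whole null cone of $h$; finally, a quadratic form vanishing on the null cone of a nondegenerate indefinite form is a scalar multiple of it (check this on $h$-indefinite coordinate $2$-planes and on null vectors of the form $\cos\theta\, e_i+\sin\theta\, e_j+f_k$). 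With that substitution your proof is complete and correct.
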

According to the theorem above, 
if a  pseudo-Riemannian manifold $M$  of dimension $n \geq 3$  
satisfies the assumptions of the conjecture, 
$M$ has  constant curvature. 
Hence we wish to weaken  the condition on the sectional curvature in the conjecture.   
Modifying the hypotheses of the conjecture, we obtain the following proposition: 
\begin{proposition}\label{pr_11}
Let $(M,\,h)$ be a time-orientable  non-spacelike geodesically complete globally hyperbolic 
Lorentzian manifold of dimension $n+1\geq 2 $.  
Suppose that 
we have a positive lower bound $\alpha ^{2}$ on 
the sectional curvature on  any indefinite 
linear subspace of dimension two in $T_{p}M$ for any  $p \in M$. 
Assume that there exists a closed spacelike hypersurface $S$ in $M$ 
satisfying the following conditions: 
\begin{enumerate}
\item[$\nb{i}$]  $S$ is totally geodesic;
\item[$\nb{ii}$]  every inextendible non-spacelike geodesic passes through $S$  exactly once.  
\end{enumerate}
For any $p \in M - S$, let $I_{S}(p)$ be the set consisting of the points in $S$  which 
a timelike geodesic joins to $p$. 
\begin{enumerate}
\item[$\nb{iii}$]  $I_{S}(p)$ is geodesically connected for all $p \in M - S$; 
\item[$\nb{iv}$]  there is no intersection of $I_{S}(p)$ and lightlike  geodesic rays from $p$. 
\end{enumerate}
Then the Calabi--Markus phenomenon occurs in $M$.  
\end{proposition}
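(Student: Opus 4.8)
The aim is to produce a single compact subset of $M$ that no isometry can move off itself, and then let proper discontinuity finish the job; the hypersurface $S$ (which we take to be compact, this being the intended reading of ``closed'') and its isometric images will play this role. I would first identify $S$ as a Cauchy surface. Using $\nb{ii}$ together with global hyperbolicity (and $\nb{iii}$, $\nb{iv}$ to exclude degeneracies) one checks that $S$ is achronal — an inextendible causal geodesic through two timelike related points of $S$ would meet $S$ at least twice — and hence, being a closed spacelike hypersurface met exactly once by every inextendible causal geodesic, a Cauchy surface; in particular $S$ is connected and $M\setminus S=I^{+}(S)\sqcup I^{-}(S)$ with both pieces open and connected. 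One should also note at the outset that, since any isometry $\phi$ of $M$ preserves the metric, the Levi-Civita connection and the causal cones, $\phi(S)$ is again a totally geodesic spacelike Cauchy surface satisfying $\nb{i}$--$\nb{iv}$; this is what lets the argument range over all of $\Iso(M)$, not merely over a subgroup fixing $S$.

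The decisive point is that no isometric image of $S$ can be disjoint from $S$ — equivalently, any two totally geodesic spacelike Cauchy surfaces $S$, $S'$ of $M$ must intersect. Suppose $S\cap S'=\emptyset$; since both are Cauchy and $S$ is connected, after relabelling $S'\subset I^{+}(S)$, and then $S\subset I^{-}(S')$ as well (the alternative would yield two timelike related points of $S$). For $q\in S$ the future normal geodesic $\gamma_{q}(t)=\exp_{q}(t\,n_{q})$ is complete and, by $\nb{ii}$, meets $S'$ exactly once, at some $\rho(q)>0$; transversality of the timelike $\gamma_{q}$ to the spacelike $S'$ makes $\rho$ a smooth positive function on the compact $S$, which therefore attains its extremal values. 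Here the curvature enters: the $S$-Jacobi fields $J$ along $\gamma_{q}$ — which, $S$ being totally geodesic, satisfy $J'(0)=0$, and which in the spacelike directions orthogonal to $\gamma_{q}'$ obey $\phi''-K\phi=0$ with $K\ge\alpha^{2}>0$ the sectional curvature of an indefinite plane containing $\gamma_{q}'$ — therefore expand and never vanish, so the normal exponential map of $S$ has no focal points; combined with the identity $\langle\gamma_{q}',J\rangle\equiv0$, this shows that at a critical point $q_{*}$ of $\rho$ the geodesic $\gamma_{q_{*}}$ meets $S'$ orthogonally, so that one obtains a timelike geodesic orthogonal to both surfaces, realizing at the maximum of $\rho$ the supremal proper time between $S$ and $S'$. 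From this configuration a comparison of the normal congruence of $S$ and of its intermediate level hypersurfaces with the de Sitter model — with $\nb{iii}$ and $\nb{iv}$ supplying the control on the sets $I_{S}(p)$ that makes the comparison legitimate — proves incompatible with $S'$ being totally geodesic, which is the required contradiction.

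Putting the pieces together: if $\Gamma$ acts isometrically, effectively and properly discontinuously on $M$, we may by effectiveness regard $\Gamma$ as a subgroup of $\Iso(M)$; each $\phi\in\Gamma$ carries the compact surface $S$ to a totally geodesic spacelike Cauchy surface $\phi(S)$, which by the previous paragraph meets $S$, so $\{\phi\in\Gamma:\phi(S)\cap S\neq\emptyset\}=\Gamma$. Applying proper discontinuity to the compact set $K=S$ forces this set — hence $\Gamma$ — to be finite. Therefore the Calabi--Markus phenomenon occurs in $M$.

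The part I expect to be genuinely hard is the non-intersection argument of the second paragraph. The hypothesis bounds the sectional curvature from below only on \emph{indefinite} planes, and in the Lorentzian setting this makes the timelike geodesics normal to $S$ \emph{diverge} rather than refocus, so no naive Myers--Synge focusing argument is available for them, while the spacelike geodesics of $S$ carry no useful Morse index; turning the pointwise bound into the global statement that $M$ admits no disjoint pair of totally geodesic spacelike Cauchy surfaces is correspondingly delicate, and it is precisely to make the necessary comparison arguments work that the technical hypotheses $\nb{iii}$ and $\nb{iv}$ on $I_{S}(p)$ — geodesic connectedness, and separation from the null cone — are imposed.
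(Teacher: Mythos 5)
Your overall strategy coincides with the paper's: exhibit a compact set that every isometric image of $S$ must meet, then feed that set to the definition of proper discontinuity. The preliminary material is also sound --- $S$ is a Cauchy surface, isometries carry it to surfaces with the same properties, and the curvature bound on indefinite planes makes the $S$-Jacobi fields along normal timelike geodesics expand, so $S$ has no focal points. The problem is that the decisive step, ruling out $\phi(S)\cap S=\emptyset$, is not actually proved. The sentence ``a comparison of the normal congruence of $S$ and of its intermediate level hypersurfaces with the de Sitter model \dots proves incompatible with $S'$ being totally geodesic'' is an announcement, not an argument: you never say what quantity is being compared, what the model computation yields, or how the contradiction with total geodesy of $S'$ arises. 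Likewise the hypotheses (iii) and (iv) are only said to ``make the comparison legitimate'' without ever being invoked in a concrete deduction. Since you yourself flag this paragraph as the genuinely hard part, the proof as written has a hole exactly where the content should be.

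For comparison, here is how the paper closes that gap. It first proves that $\exp^{\perp}\colon NS\to M$ is a global diffeomorphism --- surjectivity from a theorem of Kim--Kim, injectivity by showing that $\exp_{p}$ restricted to the hypersurface $D(p)\subset T_{p}^{\mathrm{past}}M$ is a proper local diffeomorphism onto $I_{S}(p)$, hence a covering, and then using a geodesic in $I_{S}(p)$ (condition (iii)) together with a second-variation/concavity argument to force two normal vectors with the same image to coincide; conditions (iii) and (iv) are used precisely here, to get properness of $\exp_{p}|_{D(p)}$ and connectedness of $I_S(p)$. This gives global Fermi coordinates $M\cong(\R\times S,\,-dt^{2}+g_{t})$ with a globally defined time function $\pi_{\R}$. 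A Jacobi-field computation (your expanding fields, made quantitative) shows that $\Hess\pi_{\R}$ is negative definite on spacelike vectors over $\{t>0\}$. If now $\phi(S)\subset\pi_{\R}^{-1}((T,\infty))$, then since $\phi(S)$ is totally geodesic, $\Delta_{\phi(S)}(\pi_{\R}|_{\phi(S)})=\sum_{i}(\Hess\pi_{\R})(e_{i},e_{i})<0$ pointwise, while the integral of this Laplacian over the closed manifold $\phi(S)$ vanishes by the divergence theorem --- a contradiction. Your Synge-type setup (maximizing the normal distance $\rho$ between $S$ and $S'$ and using orthogonality at the critical point) could plausibly be turned into an alternative finish via a second-variation estimate with parallel fields, but you would have to carry that computation out; as it stands, nothing in your second paragraph substitutes for it.
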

The proposition  is a generalization of  the theorem of Calabi--Markus,  
since we easily find a hypersurface  fulfilling the requirements on $S$. 
Moreover  we can drop the condition~$\nb{iii}$ if $M$ is a surface, and therefore deform 
the metric of the de Sitter surface outside a neighborhood of the hypersurface 
with  the assumptions of the proposition satisfied.  
However, in general, such a hypersurface $S$ hardly exists. 
Avoiding this difficulty, 
we present a further extension,  given below, of the  proposition. 

Let $F$ be a connected closed manifold of dimension $n \geq 1$, and 
$\{g_{t}\}_{t \in \R}$ a smooth family of Riemannian metrics of $F$. 
Denote by $M^{1,n}$  the Lorentzian manifold 
$(\R \times F,\,h_{M^{1,n}}=-dt^{2}+g_{t})$. 
The  partial derivative  in the direction $t$ 
is denoted by $\partial_{t}$.  
For $t \neq 0$,  $\partial_{|t|}$ 
is defined to be  the ``signed'' partial  derivative $(t/|t|) \partial_{t}$.  
We always put the following condition on $M^{1,n}$: 
\begin{enumerate}
\item[$(\rm{H}$$)_{t_{0},c}$]  there exist  positive constants $t_{0}$ and $c$ such that 
$\partial_{|t|}g_{t}(X,\,X) \geq 2c g_{t}(X,\,X)$ 
 for any  vector field $X$ on $F$  and $|t| \geq t_{0}$.
\end{enumerate}
The main result of our paper is 
\begin{theorem}
The Calabi--Markus phenomenon occurs in $M^{1,n}$. 
\end{theorem}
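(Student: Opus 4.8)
The plan is to reduce the theorem to the assertion that \emph{some} $\Gamma$-orbit is relatively compact, and then to rule out escaping orbits by using that $(\mathrm{H})_{t_0,c}$ forces $M^{1,n}$ to look ``de Sitter--like'' near $t=\pm\infty$: the spatial slices expand exponentially, so the causal cones collapse spatially there.

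First I would make the expansion and the resulting collapse of cones quantitative. Fixing a vector field $X$ on $F$ and integrating the inequality in $(\mathrm{H})_{t_0,c}$ in the variable $|t|$ gives $g_t(X,X)\ge e^{2c(|t|-t_0)}g_{t_0}(X,X)$ for $|t|\ge t_0$, so $\operatorname{diam}(F,g_t)\to\infty$ as $|t|\to\infty$. A future-directed causal curve $s\mapsto(t(s),x(s))$ satisfies $g_t(x',x')\le\dot t^{\,2}$, hence can be reparametrised by $t$; if it issues from time $t_1\ge t_0$, the $g_{t_1}$-length of its projection to $F$ is at most $\int_{t_1}^{\infty}e^{-c(t-t_1)}\,dt=1/c$. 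Combining this with a crude estimate over the compact neck $\{|t|\le t_0\}$, I would deduce that there is a fixed $R>0$ such that for \emph{every} $x$ the projection of $J^{+}(x)\cup J^{-}(x)$ to $F$ lies in the $g_0$-ball of radius $R$ about the projection of $x$, while for $t_1\ge t_0$ the projection of $J^{+}((t_1,p))$ lies in the $g_{t_0}$-ball about $p$ of radius $e^{-c(t_1-t_0)}/c$, which tends to $0$ as $t_1\to+\infty$ (and symmetrically for past cones as $t\to-\infty$).

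Next comes the reduction. Since $\Iso(M^{1,n})$ is a Lie group, a properly discontinuous $\Gamma$ is discrete in it; and if some orbit $\Gamma x_0$ is relatively compact then, taking $K=\overline{\Gamma x_0}$, one has $\gamma K\cap K\ni\gamma x_0\ne\varnothing$ for all $\gamma\in\Gamma$, so proper discontinuity forces $\Gamma$ finite. I would therefore assume $\Gamma$ infinite with no relatively compact orbit. As $F$ is compact and $\Gamma x_0$ (with $x_0:=(0,p_0)$) is closed and discrete in $\R\times F$, its time coordinates form an unbounded discrete subset of $\R$; after passing to a subsequence and, if necessary, reversing $t$, I obtain pairwise distinct $\gamma_k\in\Gamma$ with $\gamma_k x_0=(t_k,p_k)$, $t_k\to+\infty$, $p_k\to p_\infty$.

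The remaining, and genuinely hard, part is the endgame. From the collapse of the cones one sees that $\gamma_k$ carries the fixed sets $J^{\pm}(x_0)$ onto $J^{\pm}((t_k,p_k))$, which are driven toward the future end and spatially squeezed; using that each $\gamma_k$ sends the spacelike Cauchy surface $\{0\}\times F$ to an achronal Cauchy surface, I would try to extract from the sequence a single infinite-order isometry $\gamma$ that is ``translation-like'', whose positive and negative powers push generic points to the two ends $t=\pm\infty$ and which fixes a point $\xi^{\pm}$ of the copy of $F$ at each end. One must then show that such a $\gamma$ cannot exist in a properly discontinuous group, because $\langle\gamma\rangle$ is forced to have a relatively compact orbit on some $\gamma$-invariant subset of $M^{1,n}$ — contradicting the reduction. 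In the homogeneous model $dS^{n+1}=O(n+1,1)/O(n,1)$ this is precisely the Calabi--Markus count: the generalized eigenspaces of $\gamma\in O(n+1,1)$ for eigenvalues off the unit circle are split for the ambient form, which has only one negative direction, so they span at most a $2$-plane, whence the complementary elliptic subspace is at least $n$-dimensional and positive definite, meets $dS^{n+1}$ in a round sphere on which $\gamma$ acts by isometries, and the orbit of any point of that sphere is relatively compact. For a general $M^{1,n}$ satisfying $(\mathrm{H})_{t_0,c}$ the analogous statement has to be obtained by analysing $\Iso(M^{1,n})$ near the boundary fixed points $\xi^{\pm}$ and the geometry transverse to the causal curve joining $\xi^{-}$ to $\xi^{+}$, so as to produce such an invariant compact set; this is where I expect the main obstacle to lie, and it is exactly the ingredient that the hypotheses of Proposition~\ref{pr_11} — a totally geodesic spacelike Cauchy surface — let one sidestep in that special case.
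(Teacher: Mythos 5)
Your first two steps are sound: the exponential lower bound $g_{t}(X,X)\ge e^{2c(|t|-t_{0})}g_{t_{0}}(X,X)$, the resulting growth of $\dia(F,\,g_{t})$, and the reduction ``a relatively compact orbit forces $\Gamma$ finite, so assume $\gamma_{k}x_{0}=(t_{k},\,p_{k})$ with $t_{k}\to+\infty$'' are all correct. But the proof is not finished: the entire burden of the theorem sits in your last paragraph, and there you only describe a programme (extract a ``translation-like'' isometry, locate fixed points $\xi^{\pm}$ at the two ends, produce a $\gamma$-invariant set carrying a relatively compact orbit) whose only actual execution is in the homogeneous model $dS^{n+1}=O(n+1,\,1)/O(n,\,1)$, where it rests on the eigenvalue analysis of a single matrix. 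In the inhomogeneous setting there is no linear structure, no boundary at infinity with a usable isometric action, and no reason a limit of the $\gamma_{k}$ should exist or be translation-like; you acknowledge as much. So the proposal has a genuine gap exactly at the step that distinguishes this theorem from the classical Calabi--Markus theorem.

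The paper closes the argument by a mechanism absent from your sketch, and it uses \emph{spacelike} rather than causal geodesics. A Riccati comparison applied to $\Ddot{\gamma^{0}}\le -c(1+|\Dot{\gamma^{0}}|^{2})$ shows that every spacelike geodesic contained in $\{|t|\ge t_{0}\}$ has length less than $\pi/c$, and an integration then shows that the two endpoints of such a geodesic lying in $\pi_{\R}^{-1}([T,\,\infty))$ project, under $\pi_{T}$, to points of $F_{T}$ at $d_{T}$-distance at most a universal constant $2C^{\prime}$. One then covers $F_{T_{1}}$ by finitely many sets $V_{1},\ldots,V_{N}$ such that any two points of a single $V_{i}$ are joined by a spacelike geodesic staying in $\{t\ge T_{1}\}$ (concavity of $\gamma^{0}$ keeps it there). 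If $\phi(K)\cap K=\emptyset$ for $K=\pi_{\R}^{-1}([-T_{2},\,T_{2}])$, then $\phi(F_{T_{1}})$ lies beyond $F_{T_{2}}$, so each $\pi_{T_{2}}(\phi(V_{i}))$ has diameter at most $2C^{\prime}$, while the surjectivity of the $t$-coordinate along the timelike geodesics $\phi^{-1}(\pi_{T_{2}}^{-1}(\{p\}))$ forces $\pi_{T_{2}}(\phi(F_{T_{1}}))=F_{T_{2}}$; choosing $T_{2}$ with $\dia(F_{T_{2}})>2NC^{\prime}$ yields the contradiction. This is a purely metric counting argument, with no dynamics at infinity and no passage to a single distinguished isometry, and it is precisely the ingredient your proposal is missing.
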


In Section~\ref{la:la_2}, we investigate the behavior of 
spacelike geodesics in the ends of $M^{1,n}$.  
The results of  Section~\ref{la:la_2} are essential for the proof of the main theorem, 
which is  proved in  Section~\ref{la:la_3}. 
In Section~\ref{sec_11}, we give a simple proof of Proposition~\ref{pr_11} by using  
 the divergence theorem. 
\section{Spacelike Geodesics in $M^{1,n}$.}\label{la:la_2}
In this section, we concern ourselves with spacelike geodesics in  $M^{1,n}$. 

Let $\gamma  $ be a curve in  $M^{1,n}$, 
and $\gamma_{F}  $ the curve in $F$  obtained by projecting $\gamma  $ 
onto $F$.  
We denote by $\Dot{\gamma}(u)$ and $\Dot{\gamma_{F}}(u)$
the velocities of  $\gamma  $ and $\gamma_{F}  $ at time $u$, respectively. 
We write $\pi_{\R}$ for 
the natural projection of $M^{1,n}=\R \times F$ onto $\R$. 
Take a local coordinate system $(x^{0},\,x^{1},\,\ldots,x^{n})$ 
of $M^{1,n}$ 
such that $x^{0}=\pi_{\R}$, and 
that 
$(x^{1},\,x^{2},\,\ldots ,x^{n})$ comes from 
a local coordinate system of $F$. 
Set $\partial_{i}=\partial_{x^i}$, $\gamma^{i}(u)= x^{i} ( \gamma (u))$, 
$\Dot{\gamma^{i}}(u)=d\gamma^{i}(u)/du$, and  
$\Ddot{\gamma^{0}}(u)=d^{2}\gamma^{0}(u)/du^{2}$ for $0 \leq i \leq n$.  
The symbols  $h_{ij}$, $h^{ij}$, and $\Gamma^{k}_{i j}$ stand for    
$h_{M^{1,n}}(\partial_{i},\,\partial_{j})$, the $(i,\,j)$-th entry of the inverse matrix of 
$(h_{i j})_{0\leq i,\,j \leq n}$, and 
the Christoffel symbols for $0 \leq i,\,j,\,k \leq n$,  
respectively, where $h_{M^{1,n}}$ is the Lorentzian metric of $M^{1,n}$. 

In what follows, we make  estimates on the lengths of spacelike geodesics of $M^{1,n}$.   
\begin{lemma}\label{le: le_1}
The length of a spacelike geodesic $\gamma$ in $\pi_{\R}^{-1}(
(-\infty,$ $-t_{0}] \cup [t_{0},\,\infty)
)$ 
is less than $\pi / c$. 
\end{lemma}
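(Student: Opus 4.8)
The plan is to reduce the length estimate to a one‑dimensional Sturm comparison problem for the time coordinate of the geodesic. Let $\gamma$ be a spacelike geodesic, parametrized by arclength, lying entirely in the region $\pi_{\R}^{-1}((-\infty,-t_0]\cup[t_0,\infty))$, and write $\tau(u)=\pi_{\R}(\gamma(u))=\gamma^0(u)$. Since the metric is $-dt^2+g_t$, the spacelike unit‑speed condition reads $-\dot\tau^2+g_\tau(\dot\gamma_F,\dot\gamma_F)=1$, so $g_\tau(\dot\gamma_F,\dot\gamma_F)=1+\dot\tau^2$. First I would compute the geodesic equation for the $x^0=t$ component. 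Because $h_{00}=-1$ is constant and $h_{0j}=0$, the only Christoffel symbols of the form $\Gamma^0_{ij}$ that survive are $\Gamma^0_{ij}=\tfrac12\,\partial_t g_{ij}$ for $1\le i,j\le n$ (with a sign coming from $h^{00}=-1$). Carrying the signs through, the $0$‑component of $\ddot\gamma+\Gamma(\dot\gamma,\dot\gamma)=0$ becomes
\begin{equation}
\ddot\tau=-\tfrac12\,\partial_t g_\tau(\dot\gamma_F,\dot\gamma_F).
\end{equation}

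Now I invoke hypothesis $(\mathrm H)_{t_0,c}$. On the range of $\gamma$ we have $|\tau|\ge t_0$, and the signed derivative $\partial_{|t|}g_t=(t/|t|)\partial_t g_t$ satisfies $\partial_{|t|}g_\tau(X,X)\ge 2c\,g_\tau(X,X)$ for all $X$. Applying this to $X=\dot\gamma_F$ and using $g_\tau(\dot\gamma_F,\dot\gamma_F)=1+\dot\tau^2\ge 1$, we get $(\tau/|\tau|)\,\partial_t g_\tau(\dot\gamma_F,\dot\gamma_F)\ge 2c(1+\dot\tau^2)$, hence
\begin{equation}
(\tau/|\tau|)\,\ddot\tau \;=\; -\tfrac12(\tau/|\tau|)\,\partial_t g_\tau(\dot\gamma_F,\dot\gamma_F)\;\le\; -c\,(1+\dot\tau^2)\;\le\;-c.
\end{equation}
Thus, on each of the two ends separately, the quantity $|\tau|(u)$ is a function whose second derivative is $\le -c<0$; that is, $|\tau|$ is strictly concave with uniformly negative acceleration along $\gamma$. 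A strictly concave function that stays $\ge t_0>0$ cannot be defined on an arbitrarily long interval: if $|\tau|$ were defined on an interval of length $L$, concavity with $\ddot{|\tau|}\le -c$ forces $|\tau|$ to drop below $t_0$ (indeed below $0$) once $L$ is large enough, which contradicts $\gamma$ remaining in the end. This already bounds the domain of $\gamma$, hence its length, by something like $2\sqrt{2|\tau(u_*)|/c}$ at an interior maximum — but that bound depends on how far out $\gamma$ sits, which is not what we want.

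To get the uniform bound $\pi/c$ I would instead keep the full inequality $(\tau/|\tau|)\ddot\tau\le -c(1+\dot\tau^2)$ and compare with the ODE for $\cot$. Set $\phi=\arctan\dot\tau$ on a subinterval where $\dot\tau$ has constant sign relative to $\tau/|\tau|$; then $\dot\phi=\ddot\tau/(1+\dot\tau^2)$, so on an end where $\tau>0$ we obtain $\dot\phi\le -c$, and similarly $\dot\phi\ge c$ where $\tau<0$, after orienting. Since $\phi$ takes values in $(-\pi/2,\pi/2)$, its total variation along $\gamma$ is less than $\pi$, and $|\dot\phi|\ge c$ forces the parameter interval to have length less than $\pi/c$; as $\gamma$ is unit speed, its length is less than $\pi/c$. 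The main technical point — and the step I expect to be fiddly — is the sign bookkeeping: verifying that $\partial_t g_\tau(\dot\gamma_F,\dot\gamma_F)$ enters the geodesic equation with exactly the sign that makes $(\tau/|\tau|)\ddot\tau$ negative, and handling the two ends $t\le -t_0$ and $t\ge t_0$ together via the signed derivative $\partial_{|t|}$, including the case where $\gamma$ might a priori wander between... no, it cannot cross $(-t_0,t_0)$ by hypothesis, so each geodesic lies in a single end and the argument applies verbatim there. Once the sign is pinned down, the comparison with $\arctan$ (equivalently, a Sturm‑type argument against $\cos(cu)$) delivers the bound immediately.
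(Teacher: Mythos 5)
Your proposal is correct and follows essentially the same route as the paper: both derive the Riccati inequality $\ddot\gamma^{0}\le -c\,(1+|\dot\gamma^{0}|^{2})$ from the $0$-component of the geodesic equation together with hypothesis $(\mathrm{H})_{t_{0},c}$, and then bound the parameter interval by a comparison argument. Your endgame via $\phi=\arctan\dot\gamma^{0}$ (total variation less than $\pi$ while $|\dot\phi|\ge c$) is just the linearization of the paper's Riccati comparison with $G(u)=\cot(cu)$, so the two proofs are equivalent.
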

\begin{proof}
We require  $\gamma  $ to be parametrized by arc length. 
For simplicity, let  the domain of $\gamma$ be  $[0, \, L]$. 
As  $\gamma  $ maintains  unit speed, 
\begin{equation}\label{eq: eq_1}
h_{M^{1,n}}(\Dot{\gamma}(u),\,\Dot{\gamma}(u)) 
= - |\Dot{\gamma^{0}}(u)|^2 
+ g_{\gamma^{0}(u)}(\Dot{\gamma_{F}}(u),\,
\Dot{\gamma_{F}}(u)) =1.
\end{equation}
We assume that $\gamma  $ stays in $\pi_{\R}^{-1}([t_{0},\,\infty))$. 
Here we should notice that 
\begin{align*}
\Gamma^{0}_{0 0}= \Gamma^{0}_{i 0}=\Gamma^{0}_{0 j} =0,\  
\Gamma^{0}_{i j}=\frac{1}{2}\partial_{0} h_{i j}\ (1 \leq i,\,j \leq n). 
\end{align*}
Therefore we obtain 
\begin{align}\label{eq: a_eq_1}
\Ddot{\gamma^{0}}(u)+ 
\sum_{0 \leq i,j \leq n} \Gamma^{0}_{i j}\Dot{\gamma^{i}}(u)\Dot{\gamma^{j}}(u) 
= \Ddot{\gamma^{0}}(u) + 
\frac{1}{2} (\partial_{t}g)_{\gamma^0 (u)}
(\Dot{\gamma_{F}}(u),\,\Dot{\gamma_{F}}(u)). 
\end{align}
Since $\gamma  $ is a geodesic, the left-hand side  is zero. 
By $(\rm{H}$$)_{t_{0},c}$, (\ref{eq: eq_1}), and (\ref{eq: a_eq_1}),  
\begin{align}\label{eq: eq_2}
\Ddot{\gamma^{0}}(u) =- 
\frac{1}{2} (\partial_{t}g)_{\gamma^0 (u)}
(\Dot{\gamma_{F}}(u),\,\Dot{\gamma_{F}}(u)) 
\leq -c g_{\gamma^0(u)}
(\Dot{\gamma_{F}}(u),\,\Dot{\gamma_{F}}(u))
= -c (1+ |{\Dot\gamma^{0}}(u)|^2). \nonumber
\end{align}
Putting $G(u)=\cot (c u) $, 
we check at once that 
$dG(u) /du  = -c (1+G(u)^{2})$. 
We can take a positive number $u_{0} < \pi / c$ such that 
$\Dot{\gamma^{0}}(0) = G(u_{0})$. 
Then  by  the Riccati comparison argument 
we have $\Dot{\gamma^{0}}(u) \leq G(u+u_{0})$ 
as long as $\gamma^{0}(u) \geq t_{0}$ (see~\cite{Kar}). 
It immediately follows  that 
\begin{equation*}
\gamma^{0}(u) \leq  \gamma^{0}(0) + \int_{0}^{u} \cot c(s+u_{0})\ ds 
\end{equation*}
whenever $u$ satisfies $\gamma^{0}(u) \geq t_{0}$. 
If $u$ goes to $\pi/c -u_0$, the right-hand side approaches $-\infty$. 
As $\gamma^{0}(u) \geq t_{0}$, 
the length  of $\gamma  $ is less than $\pi/c -u_0< \pi/c$. 
The same argument applies to the case $\gamma \subset \pi_{\R}^{-1}(
(-\infty,\,-t_{0}])$ as well. 
\end{proof}
Write  $F_{t}$ for 
the spacelike submanifold $\pi_{\R}^{-1}(\{t\})$ in $M^{1,n}$, 
which is isometric to the Riemannian manifold $(F,\,g_t)$. 
Let $\pi_{t}$ be the natural projection of $M^{1,n}=\R \times F$ onto $F_{t}$. 
\begin{lemma}\label{le: le_2}
Let $\gamma : [0,\,L] \rightarrow M^{1,n}$  be a spacelike geodesic 
in $\pi_{\R}^{-1}((-\infty,\, -t_{0}] \cup [ t_{0},\, \infty))$ 
such that $\Dot{\gamma^{0}}(0)=0$. 
Then there is a constant $C^{\prime}$ that dominates the length of the spacelike curve
$\pi_{\gamma^{0}(L)}(\gamma  )$. 
\end{lemma}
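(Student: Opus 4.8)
The plan is to parametrize $\gamma$ by arc length and estimate
\[
\mathrm{length}\bigl(\pi_{\gamma^0(L)}(\gamma)\bigr)=\int_0^L\sqrt{g_{\gamma^0(L)}\bigl(\dot\gamma_F(u),\dot\gamma_F(u)\bigr)}\,du
\]
by comparing the ``frozen'' metric $g_{\gamma^0(L)}$ with the metrics $g_{\gamma^0(u)}$ actually met along $\gamma$. Since the time-reversal $(t,x)\mapsto(-t,x)$ is an isometry onto $(\R\times F,\,-dt^2+g_{-t})$ and the family $\{g_{-t}\}$ again satisfies $(\mathrm{H})_{t_0,c}$, I would first reduce to the case $\gamma\subset\pi_{\R}^{-1}([t_0,\infty))$, exactly as in the proof of Lemma~\ref{le: le_1}.

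Next I would record the behaviour of the time coordinate. From the computation in the proof of Lemma~\ref{le: le_1} one has $\Ddot\gamma^0(u)\le-c\bigl(1+|\dot\gamma^0(u)|^2\bigr)\le-c<0$, so $\dot\gamma^0$ is strictly decreasing; together with $\dot\gamma^0(0)=0$ this gives $\dot\gamma^0\le0$ on $[0,L]$, hence $\gamma^0$ is non-increasing, while Lemma~\ref{le: le_1} itself gives $L<\pi/c$. Setting $\rho(u):=\gamma^0(u)-\gamma^0(L)$ we thus have $\rho\ge0$, $\rho(L)=0$, and $\rho'(u)=\dot\gamma^0(u)\le0$.

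The heart of the argument is the quantitative use of $(\mathrm{H})_{t_0,c}$: rewriting it on $[t_0,\infty)$ as $\partial_t\log g_t(X,X)\ge2c$ and integrating over $[\gamma^0(L),\gamma^0(u)]$ with $X=\dot\gamma_F(u)$ gives
\[
g_{\gamma^0(L)}\bigl(\dot\gamma_F(u),\dot\gamma_F(u)\bigr)\le e^{-2c\rho(u)}\,g_{\gamma^0(u)}\bigl(\dot\gamma_F(u),\dot\gamma_F(u)\bigr)=e^{-2c\rho(u)}\bigl(1+|\dot\gamma^0(u)|^2\bigr),
\]
the last equality being the unit-speed relation~\eqref{eq: eq_1}. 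Using $\sqrt{1+x^2}\le1+x$ for $x\ge0$ and $|\dot\gamma^0|=-\rho'$, then integrating and noting $\int_0^Le^{-c\rho}(-\rho')\,du=\tfrac1c\bigl(1-e^{-c\rho(0)}\bigr)$, I obtain
\[
\mathrm{length}\bigl(\pi_{\gamma^0(L)}(\gamma)\bigr)\le\int_0^Le^{-c\rho(u)}\,du+\tfrac1c\bigl(1-e^{-c\rho(0)}\bigr)\le L+\tfrac1c<\frac{\pi+1}{c},
\]
so $C':=(\pi+1)/c$ works.

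The step I expect to be the main obstacle is noticing that plain monotonicity of $t\mapsto g_t(X,X)$ does not suffice: it only yields $\mathrm{length}\le L+\int_0^L|\dot\gamma^0|=L+\gamma^0(0)-\gamma^0(L)$, and $\gamma^0(0)$ ranges over an unbounded set as $\gamma$ varies. One must exploit the \emph{exponential} strengthening furnished by $(\mathrm{H})_{t_0,c}$, whose decaying weight $e^{-c\rho(u)}$ is precisely what absorbs a large $|\dot\gamma^0|$; the telescoping identity for $\int e^{-c\rho}(-\rho')$ then produces a bound uniform in the geodesic. A secondary point requiring care is checking that $\{g_{-t}\}$ inherits $(\mathrm{H})_{t_0,c}$, so that the time-reversal reduction is legitimate.
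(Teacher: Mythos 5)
Your proposal is correct and follows essentially the same route as the paper: reduce to $\gamma\subset\pi_{\R}^{-1}([t_0,\infty))$, use the unit-speed identity together with the exponential comparison $g_{t_2}(X,X)\ge e^{2c(t_2-t_1)}g_{t_1}(X,X)$ derived from $(\mathrm{H})_{t_0,c}$, bound $\sqrt{1+|\dot\gamma^0|^2}\le 1+|\dot\gamma^0|$, and control the second integral by the change of variables $s=\gamma^0(u)$, yielding the same uniform bound $L+1/c$ with $L<\pi/c$ from Lemma~\ref{le: le_1}. Your closing remark correctly identifies why mere monotonicity of $g_t$ would not suffice, which is exactly the point of the paper's argument.
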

\begin{proof}  
We give the proof in the case where $\gamma \subset
\pi_{\R}^{-1}( [t_{0},\,\infty))$, 
for the same proof works in the other case as well.   
We assume that $\gamma  $ is parametrized by arc length. 
Then we  see that 
\begin{equation*}
1+|\Dot{\gamma^{0}}(u)|^{2}=g_{\gamma^{0}(u)}
(\Dot{\gamma_{F}}(u),\,\Dot{\gamma_{F}}(u)). 
\end{equation*}
By $(\rm{H}$$)_{t_{0},c}$, for any vector field $X$ on $F$ and positive numbers
$t_{1},\,t_{2}$ with $t_{2} \geq t_{1} \geq t_{0}$, 
we have $g_{t_{2}}(X,\,X) \geq e^{2c (t_{2}-t_{1})} g_{t_{1}}(X,\,X)$. 
Since $\Dot{\gamma^{0}}(0)=0$, 
and $\gamma^{0}  $ is strictly concave, $\gamma^{0}  $ is strictly decreasing. 
It follows that 
\begin{align*}
\sqrt{g_{\gamma^{0}(L)}
(\Dot{\gamma_{F}}(u),\,\Dot{\gamma_{F}}(u))}
&=\sqrt{(1+|\Dot{\gamma^{0}}(u)|^{2})\left(\frac{g_{\gamma^{0}(u)}
(\Dot{\gamma_{F}}(u),\,\Dot{\gamma_{F}}(u))}{g_{\gamma^{0}(L)}
(\Dot{\gamma_{F}}(u),\,\Dot{\gamma_{F}}(u))}\right)^{-1}} \\
&\leq \left(\sqrt{1+|\Dot{\gamma^{0}}(u)|^{2}} \right) e^{-c (\gamma^{0}(u)-\gamma^{0}(L))} \\
&\leq (1+|\Dot{\gamma^{0}}(u)|) e^{-c (\gamma^{0}(u)-\gamma^{0}(L))}. 
\end{align*}
Integrating the above inequality, we find that %
\begin{align}\label{in_1}
&\int_{0}^{L} \sqrt{g_{\gamma^{0}(L)}
(\Dot{\gamma_{F}}(u),\,\Dot{\gamma_{F}}(u))} \ du \\
& \leq 
\int_{0}^{L} e^{-c (\gamma^{0}(u)-\gamma^{0}(L))} du + 
\int_{0}^{L} |\Dot{\gamma^{0}}(u)|\ e^{-c (\gamma^{0}(u)-\gamma^{0}(L))}\ du. \nonumber
\end{align}
As $e^{-c (\gamma^{0}(u)-\gamma^{0}(L))} \leq 1$, 
we see that  
\begin{equation*}
\int_{0}^{L} e^{-c (\gamma^{0}(u)-\gamma^{0}(L))} du \leq L.
\end{equation*}
Due to Lemma~\ref{le: le_1}, $L$ is bounded above by $\pi/c$. 
We estimate the second term of the right-hand side of the inequality~(\ref{in_1}). 
\begin{align*}
\int_{0}^{L} |\Dot{\gamma^{0}}(u)|\ e^{-c (\gamma^{0}(u)-\gamma^{0}(L))}\ du 
&= 
- \int_{0}^{L} \Dot{\gamma^{0}}(u)\ e^{-c (\gamma^{0}(u)-\gamma^{0}(L))}\ du \\
&=
- \int_{\gamma^{0}(0)}^{\gamma^{0}(L)} 
 e^{-c (s-\gamma^{0}(L))}\ ds 
\leq \frac{1}{c}.
\end{align*}
The proof is complete. 
\end{proof}
\begin{remark}\label{re: re_2}
Fix $T\geq t_{0}$. 
Let $\gamma : [0,\,1] \rightarrow \pi_{\R}^{-1}([T,\,\infty))$ be a spacelike 
geodesic. 
We extend $\gamma   $ as long as  $\gamma \subset \pi_{\R}^{-1}([T,\,\infty))$. 
Let $\overline{\gamma}   $ be the maximal  extension of $\gamma  $, 
and $I$ the domain of $\overline{\gamma}  $. 
We prove that each of the endpoints of $\overline{\gamma}  $ reaches $F_{T}$. 
By Lemma~\ref{le: le_1}, 
$I$ is a bounded interval.  
Suppose that 
 an endpoint of $I$ does not belong to $I$. 
For simplicity, we  put $I=[0,\,L)$, where $L>0$. 
We show that, if $u$ approaches $L$, $\overline{\gamma}(u)$ converges.     
As ${\overline{\gamma}^{0}}  $ is concave and bounded, the limit 
$\displaystyle\lim_{u \rightarrow L}  {\overline{\gamma}^{0}}(u)$ exists.
We regard $\overline{\gamma}_{F}  $ as a curve in $(F,\,g_{t_0})$. 
By Lemma~\ref{le: le_2}, the length of ${\overline{\gamma}}_F  $ is finite. 
Since $F$ is compact,  the limit $\displaystyle\lim_{u \rightarrow L} {\overline{\gamma}}_{F}(u)$ lies in $F$. 
Therefore we can extend $\overline{\gamma}  $ until the endpoint continuously. 
We say that an open set $U$ of a Lorentzian manifold $M$    is \emph{convex} 
if for any  point $p$  of $U$ there exists an open set $V$ of $T_{p}M$  such that  
the restriction of $\exp_{p}$ to $V$ is a diffeomorphism onto $U$, 
and that $v \in V$ implies $t v \in V$  for any $t \in [0,\, 1]$. 
The existence of   a convex neighborhood of any point of $M^{1,n}$ leads us to  
the extension of the geodesic $\gamma$  to the limit $\displaystyle\lim_{u \rightarrow L}  {\overline{\gamma}}(u)$. 
This contradicts our assumption. 
It follows that $I$ is closed. 
The maximality of $\overline{\gamma}  $ completes the proof. 
\end{remark}
Let $d_{T}$ be 
the ``intrinsic''  distance on $F_{T}$ defined by the Riemannian metric of $F_{T}$. 
Combining the preceding lemmas, we  obtain
\begin{corollary}\label{co: co_3}
Fix a positive constant $T>t_{0}$, and take
a connected spacelike geodesic $\gamma$  
in either $ \pi_{\R}^{-1}([T,\, \infty))$ or $\pi_{\R}^{-1}((-\infty,\,-T])$. 
Let $y,\,z$ be the endpoints of the geodesic $\gamma$. 
If $\gamma$ is included in $ \pi_{\R}^{-1}([T,\, \infty))$ (resp.\  in $\pi_{\R}^{-1}((-\infty,\,-T])$), 
then $d_{T}(\pi_{T}(y),\,\pi_{T}(z))$ (resp.\ $d_{-T}(\pi_{-T}(y),\,\pi_{-T}(z))$) 
is dominated by $2C^{\prime}$, where $C^{\prime}$ is the constant  in Lemma~\ref{le: le_2}. 
\end{corollary}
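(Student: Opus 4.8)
The plan is to extend $\gamma$ until both of its endpoints land on $F_{T}$, split the extended geodesic at the unique point where its time--component $\gamma^{0}=\pi_{\R}\circ\gamma$ is largest, and apply Lemma~\ref{le: le_2} to each of the two resulting halves. We may assume throughout that $\gamma\subset\pi_{\R}^{-1}([T,\infty))$: the case $\gamma\subset\pi_{\R}^{-1}((-\infty,-T])$ is recovered by applying the isometry $(t,x)\mapsto(-t,x)$, under which the family $\{g_{t}\}$ is replaced by $\{g_{-t}\}$, condition $(\mathrm{H})_{t_{0},c}$ is preserved, and $\pi_{\R}^{-1}((-\infty,-T])$ is carried isometrically onto $\pi_{\R}^{-1}([T,\infty))$, with $d_{-T}$ becoming $d_{T}$.

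First I would reparametrize $\gamma$ so that it has domain $[0,1]$ and run the argument of Remark~\ref{re: re_2} to pass to the maximal extension $\overline{\gamma}\colon[a,b]\to\pi_{\R}^{-1}([T,\infty))$; that remark tells us $[a,b]$ is a compact interval and $\overline{\gamma}(a),\overline{\gamma}(b)\in F_{T}$, i.e.\ $\overline{\gamma}^{0}(a)=\overline{\gamma}^{0}(b)=T$. By the computation in the proof of Lemma~\ref{le: le_1}, valid wherever $\overline{\gamma}^{0}\geq t_{0}$ and hence on all of $[a,b]$ since $T>t_{0}$, the second derivative of $\overline{\gamma}^{0}$ is bounded above by $-c<0$, so $\overline{\gamma}^{0}$ is strictly concave. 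A strictly concave function on $[a,b]$ that equals $T$ at both endpoints and is $\geq T$ throughout attains its maximum at a unique interior point $u_{*}\in(a,b)$, where necessarily $(\overline{\gamma}^{0})'(u_{*})=0$.

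Next I would split $\overline{\gamma}$ at $u_{*}$ and reparametrize the two halves so that each starts at $u_{*}$: the backward half $s\mapsto\overline{\gamma}(u_{*}-s)$ on $[0,u_{*}-a]$ and the forward half $s\mapsto\overline{\gamma}(u_{*}+s)$ on $[0,b-u_{*}]$. Each is a spacelike geodesic contained in $\pi_{\R}^{-1}([t_{0},\infty))$ whose time--component has vanishing derivative at parameter value $0$, so Lemma~\ref{le: le_2} applies to both. Since the terminal value of the time--component of the backward half is $\overline{\gamma}^{0}(a)=T$ and that of the forward half is $\overline{\gamma}^{0}(b)=T$, the projection in Lemma~\ref{le: le_2} is precisely $\pi_{T}$, so the lemma gives that $\overline{\gamma}_{F}|_{[a,u_{*}]}$ and $\overline{\gamma}_{F}|_{[u_{*},b]}$, viewed as curves in $(F,g_{T})$, each have length at most $C'$. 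Concatenating, the curve $\overline{\gamma}_{F}\colon[a,b]\to(F,g_{T})$ has length at most $2C'$. Finally, since $\gamma$ is a subarc of $\overline{\gamma}$, the portion of $\overline{\gamma}_{F}$ lying over the parameter interval of $\gamma$ is a curve in $F_{T}$ joining $\pi_{T}(y)$ to $\pi_{T}(z)$ of length at most $2C'$, whence $d_{T}(\pi_{T}(y),\pi_{T}(z))\leq 2C'$, as claimed.

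The only real subtlety is the one that forces the passage to the maximal extension $\overline{\gamma}$: an arbitrary spacelike geodesic in $\pi_{\R}^{-1}([T,\infty))$ need not possess any critical point of its time--component, so Lemma~\ref{le: le_2} cannot be applied to it (or to its halves) directly. Extending until both endpoints meet $F_{T}$ simultaneously both manufactures the interior maximum that supplies the vanishing--derivative hypothesis of Lemma~\ref{le: le_2} and makes the terminal slices coincide with $F_{T}$, so that no comparison between the different metrics $g_{t}$ is needed. One should also check in the concavity step that the maximum cannot sit at an endpoint of $[a,b]$, which is immediate: by strict concavity $\overline{\gamma}^{0}$ would otherwise have to drop below $T$ just inside $[a,b]$, contradicting $\overline{\gamma}\subset\pi_{\R}^{-1}([T,\infty))$.
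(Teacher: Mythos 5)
Your proof is correct and follows essentially the same route as the paper: extend $\gamma$ via Remark~\ref{re: re_2} until both endpoints lie on $F_{T}$, locate the interior critical point of the time--component supplied by strict concavity, and apply Lemma~\ref{le: le_2} to each of the two halves to bound the projected length by $2C'$. The only differences are cosmetic --- you spell out the existence and uniqueness of the interior maximum and the reduction of the $\pi_{\R}^{-1}((-\infty,-T])$ case, which the paper leaves implicit.
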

\begin{proof}
It is sufficient to show the case where $\gamma \subset 
\pi_{\R}^{-1}([T,\, \infty))$. 
 By  Remark~\ref{re: re_2}, 
we can extend the geodesic $\gamma$ until each of 
the endpoints of the geodesic reaches $F_{T}$. 
We  write this extension of $\gamma$ as $\overline{\gamma}$. 
$L(-)$ stands for the length of a spacelike curve. 
We have 
\begin{align*}
d_{T}(\pi_{T}(y),\,\pi_{T}(z)) \leq L(\pi_{T}(\gamma))
\leq L(\pi_{T}(\overline{\gamma})). 
\end{align*}
Let $[-L_{0},\,L_{1}]$ be the domain of $\overline{\gamma}$ such that 
$\Dot{\overline{\gamma}^{0}}(0)=0$.  
Then 
by Lemma~\ref{le: le_2} each of $L(\pi_{T}(\overline{\gamma}|_{[-L_{0},\,0]}))$ and 
$L(\pi_{T}(\overline{\gamma}|_{[0,\,L_{1}]}))$ 
is bounded above by $C^{\prime}$. 
\end{proof}
\section{Proof of The Main Result.}\label{la:la_3}
This section is devoted to the proof of the main result.

We denote by $\dia (-)$  the diameter of a metric space. First we show
\begin{lemma}\label{le: le_3.5}
Let $(X,\,d)$ be a connected compact metric space. 
For  any  family $\{A_{i}\}_{i=1}^{k}$ of connected closed subsets of $X$ 
such that $X=\displaystyle\bigcup_{i=1}^{k} A_{i}$,  
we have  
\begin{equation*}\label{eq: eq_a}
\dia (X) \leq \sum_{i=1}^{k} \dia (A_{i}).
\end{equation*}
\end{lemma}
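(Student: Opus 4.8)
The plan is to reduce the inequality to a chain argument on the \emph{nerve} of the cover. Discarding any empty $A_{i}$, we may assume every $A_{i}$ is nonempty, and we introduce the finite graph $G$ with vertex set $\{1,\dots,k\}$ in which distinct $i$ and $j$ are joined by an edge exactly when $A_{i}\cap A_{j}\neq\emptyset$. The first key step is that $G$ is connected. Indeed, if $C$ were a connected component of $G$ with $C\neq\{1,\dots,k\}$, then $\bigcup_{i\in C}A_{i}$ and $\bigcup_{i\notin C}A_{i}$ would be nonempty closed subsets of $X$ (finite unions of closed sets) whose union is $X$; and they are disjoint, since a common point of some $A_{i}$ with $i\in C$ and some $A_{j}$ with $j\notin C$ would produce an edge of $G$ joining $C$ to its complement. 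This contradicts the connectedness of $X$.

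Next comes the telescoping estimate. Fix $x,y\in X$ and choose indices $p,q$ with $x\in A_{p}$ and $y\in A_{q}$. Since $G$ is connected, pick a path of minimal length $p=i_{0},i_{1},\dots,i_{m}=q$ in $G$; minimality forces the vertices $i_{0},\dots,i_{m}$ to be pairwise distinct, because a repeated vertex would allow a shorter walk, hence a shorter path. For $1\le l\le m$ select $z_{l}\in A_{i_{l-1}}\cap A_{i_{l}}$, and set $z_{0}=x$, $z_{m+1}=y$. For each $l$ with $0\le l\le m$ both $z_{l}$ and $z_{l+1}$ lie in $A_{i_{l}}$, so $d(z_{l},z_{l+1})\le\dia(A_{i_{l}})$, and the triangle inequality gives
\begin{equation*}
d(x,y)\le\sum_{l=0}^{m}d(z_{l},z_{l+1})\le\sum_{l=0}^{m}\dia(A_{i_{l}})\le\sum_{i=1}^{k}\dia(A_{i}),
\end{equation*}
where the last step uses that $i_{0},\dots,i_{m}$ are distinct, so each $A_{i}$ is charged at most once. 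Since the right-hand side is independent of $x$ and $y$, taking the supremum over all pairs yields $\dia(X)\le\sum_{i=1}^{k}\dia(A_{i})$. Here compactness of $X$ enters only to guarantee that the diameters involved are finite, and connectedness of $X$ only in the nerve step.

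The only genuine obstacle is the connectedness of $G$; everything else is a routine telescoping. In writing it out carefully I would make sure that the two unions in the disconnection argument really are closed (finite unions of closed sets) and really are disjoint (no edge across the partition forces empty intersection), and that the minimal-length path used in the second step visits no vertex twice, so that the final sum over $i_{0},\dots,i_{m}$ is dominated by the sum over all of $\{1,\dots,k\}$.
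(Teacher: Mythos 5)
Your proof is correct and follows essentially the same chain-and-telescope strategy as the paper's: connect the two points through a sequence of pairwise-intersecting sets $A_{i}$ and apply the triangle inequality, charging each $\dia(A_{i})$ at most once. Your nerve-graph connectedness argument in fact supplies a clean justification for the existence of such a chain, a step the published proof only asserts by saying the $A_{i}$ can be ``rearranged in an appropriate manner.''
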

\begin{proof}
As $X$ is compact, there are two points $y,\,z\in X$ satisfying $d(y,\,z)=\dia (X)$. 
Replacing the indices properly, we may require $A_1$ to contain  $y$. 
We have only to consider the case where $z$ belongs to some $A_i$ with $i \neq 1$. 
Rearranging $A_2,\, \ldots,\, A_{k}$ in an appropriate manner allows us to assume that  
$z\in A_{l}$ for some $l$, and that $A_{i} \cap A_{i+1} \neq \emptyset$ for any positive integer $i \leq l-1$. 
Put $w_1 = y$ and $w_{l+1} = z$, and take $w_{i+1} \in A_{i} \cap A_{i+1},\, i=1,\, 2 ,\, \ldots,\, l-1$.
Then we obtain  
\begin{align*}
 \dia (X) 
=d(y,\,z) &\leq  \sum_{i=1}^{l} d(w_i ,\,w_{i+1}) 
\leq \sum_{i=1}^{l} \dia (A_{i}) \leq \sum_{i=1}^{k} \dia (A_{i}). 
\end{align*}
\end{proof}
We  return to our manifold $M^{1,\,n}$.  
Next we  prove 
\begin{lemma}\label{le: le_4}
For any positive numbers $T$ and $\epsilon$, 
there are subsets $V_{1},\,V_{2},\, \ldots, \,
V_{n(T,\,\epsilon)}$ of  $F_{T}$  
fulfilling the following conditions: 
\begin{enumerate}
\item[\rm{(i)}] $V_{i},\,i=1,\,2,\, \ldots,\, 
n(T,\,\epsilon),$  are  homeomorphic to  a closed ball of  
dimension $n$;
\item[\rm{(ii)}] for any $p,\,q \in V_{i}$, $p$ is joined to $q$ by a spacelike geodesic in 
$\pi_{\R}^{-1}((T-\epsilon,\,T+\epsilon))$;  
\item[\rm{(iii)}] $F_{T}= \displaystyle\bigcup_{i=1}^{n(T,\,\epsilon)} V_{i}$. 
\end{enumerate} 
\end{lemma}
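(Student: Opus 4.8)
The plan is to cover $F_T$ by small coordinate balls and then enlarge each one slightly so that it becomes geodesically convex inside the thin slab $\pi_{\R}^{-1}((T-\epsilon,\,T+\epsilon))$. First I would fix $T$ and $\epsilon$ and, for each point $p \in F_T$, use the existence of convex neighborhoods (as recalled in Remark~\ref{re: re_2}) to produce an open set $U_p \ni p$ in $M^{1,n}$ that is convex and contained in the slab $\pi_{\R}^{-1}((T-\epsilon,\,T+\epsilon))$; here convexity in the sense defined above means any two points of $U_p$ are joined by a (unique) geodesic lying in $U_p$. Intersecting with $F_T$ and shrinking, I can assume $W_p := U_p \cap F_T$ is a coordinate ball around $p$ in $F_T$, and that its closure $\overline{W_p}$ (taken small enough) is still contained in a convex set inside the slab. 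Since $F_T$ is compact (being homeomorphic to $F$), finitely many of the $W_p$ cover $F_T$; call the corresponding closed balls $V_1,\dots,V_{n(T,\epsilon)}$.

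It then remains to check the three stated properties. Property~(i) is immediate from the construction: each $V_i$ is the closure of a small coordinate ball in the manifold $F_T$, hence homeomorphic to a closed $n$-ball. Property~(iii) is exactly the statement that the $W_p$ cover $F_T$, which we arranged by compactness. For property~(ii), take $p,q \in V_i$; both lie in the convex open set $U$ of $M^{1,n}$ chosen around the center of $V_i$, so by convexity there is a geodesic $\sigma$ from $p$ to $q$ with image in $U \subset \pi_{\R}^{-1}((T-\epsilon,\,T+\epsilon))$. This geodesic is spacelike: its endpoints lie in $F_T$, so $\sigma^0$ takes the same value at both ends, and one must argue that $\sigma^0$ cannot bulge out — but in fact, by choosing $\epsilon < t_0$ we may alternatively invoke the slab being far out, yet the cleaner route is simply to note that a geodesic joining two points of a spacelike slice within a sufficiently small convex neighborhood is automatically spacelike (the causal character is controlled since the neighborhood can be taken as small as we like, so $\dot\sigma$ stays close to $T_pF_T$). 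So shrinking the $U_p$ at the outset to guarantee this is the key preparatory move.

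The main obstacle I anticipate is genuinely \emph{that} last point: ensuring the connecting geodesic is spacelike, not merely that it stays in the slab. The convexity notion in the paper only guarantees a geodesic exists and stays in $U$; it says nothing a priori about causal type. The fix is to observe that, since $F_T$ is a smooth spacelike hypersurface, each point has a neighborhood in which the metric is uniformly close to a product metric $-dt^2 + g_T$, and in such a neighborhood any geodesic with both endpoints on $\{t = \text{const}\}$ stays spacelike provided the neighborhood is small — equivalently, one can first choose a small tubular neighborhood of $p$ in which $F_T$-slices are spacelike and geodesics emanating from points of $F_T$ toward nearby points of $F_T$ have spacelike initial velocity, then take $U_p$ inside it and also convex. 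Making this quantitative is a short compactness/continuity argument, and once it is in place properties~(i)--(iii) follow as above. I would also remark that one could instead derive a version of (ii) from the geodesic estimates of Section~\ref{la:la_2}, but the local convexity argument is self-contained and cleaner for this lemma.
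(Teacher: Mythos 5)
Your construction (convex neighborhoods inside the slab, intersect with $F_T$, take closed coordinate balls, finish by compactness of $F_T$) is exactly the paper's, and properties (i) and (iii) are handled the same way. The one place you diverge is the verification of (ii), i.e.\ that the connecting geodesic is \emph{spacelike}: you correctly identify this as the crux, but you resolve it with a local continuity/compactness argument (shrink the neighborhoods until the connecting geodesics have velocity close to $T_pF_T$, hence spacelike). That works, but it forces an extra quantitative shrinking step and a uniformity argument that you only sketch. The paper's route is simpler and global: a geodesic has constant causal character, and for any non-spacelike curve in $M^{1,n}$ one has $|\Dot{\gamma^{0}}|^{2}\geq g_{\gamma^{0}}(\Dot{\gamma_{F}},\Dot{\gamma_{F}})$, so $\Dot{\gamma^{0}}$ can never vanish and $\gamma^{0}$ is strictly monotonic; such a geodesic therefore cannot join two points with the same value of $\pi_{\R}$. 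Since $p,q\in V_i\subset F_T$ satisfy $\pi_{\R}(p)=\pi_{\R}(q)=T$, the geodesic supplied by convexity is automatically spacelike, with no further shrinking of the neighborhoods needed. You actually start down this path (``$\sigma^0$ takes the same value at both ends'') before abandoning it; pushing that observation through via monotonicity is the cleaner finish, and the aside about choosing $\epsilon<t_0$ is not relevant to the argument.
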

\begin{proof}
For any $x \in F_{T}$, we can take a convex neighborhood $W_{x}$ 
of $x$ in $\pi_{\R}^{-1}((T-\epsilon,\,T+\epsilon))$.
There is an embedding $\iota_{x}$ of a closed unit ball of dimension $n$ 
into 
$W_{x} \cap F_{T} $ such that 
$\iota_{x}(0)=x $. 
Let $V_{x}$  be its image.  
Since $V_{x}$ is included in  $W_{x}$, 
for any two points $p, \,q \in V_{x}$ 
we can find a geodesic from $p$ to $q$. 
Indeed the geodesic is spacelike. 
This follows from the fact that $\pi_{\R}(p)=\pi_{\R}(q)$, and that if  $\gamma$ is a 
non-spacelike geodesic, 
$\gamma^{0}$ is a strictly monotonic function. 
 Compactness of $F_{T}$ implies that  there are $x_{1},\, x_{2},\, \ldots ,\,x_{n(T,\,\epsilon)}
$ such that $F_{T}= \displaystyle\bigcup_{i=1}^{n(T,\,\epsilon)} V_{x_{i}}$. 
This completes  
the proof. 
\end{proof}
Let us prove the main theorem 
stated in the introduction.  
We denote by $\Iso (M^{1,n})$  the isometry group of $M^{1,n}$. 
We write $\Iso^{+}(M^{1,n})$ for the subgroup of $\Iso (M^{1,n})$ consisting of those 
elements that
preserve time-orientation. 
The Calabi--Markus phenomenon emerges if a certain compact set $K$ meets  
the image of $K$ under any $\phi \in \Iso (M^{1,n})$. 
The last claim remains valid even when 
 $\Iso (M^{1,n})$ is replaced by $\Iso^{+} (M^{1,n})$, 
since the index of $\Iso^{+} (M^{1,n})$ in $\Iso (M^{1,n})$ is at most two. 
Therefore it suffices to show the following lemma: 
\begin{lemma}
For some sufficiently large $T_2 >0$, 
 $K=\pi_{\R}^{-1}([-T_{2},\,T_{2}])$ fulfills  
$\phi (K) \cap K \neq \emptyset$ 
for any $\phi \in \Iso^{+} (M^{1,n}) $. 
\end{lemma}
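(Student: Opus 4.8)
The plan is to argue by contradiction: suppose that for every $T_2>0$ there is an isometry $\phi\in\Iso^{+}(M^{1,n})$ with $\phi(K)\cap K=\emptyset$ for $K=\pi_{\R}^{-1}([-T_2,T_2])$. Since $\phi$ preserves time-orientation and $\phi(K)$ is disjoint from the ``slab'' $K$, the image $\phi(K)$ must lie entirely in one of the two ends, say $\pi_{\R}^{-1}([T_2,\infty))$ (the other case being symmetric). In particular $\phi(F_0)\subset\pi_{\R}^{-1}([T_2,\infty))$, where $F_0=\pi_{\R}^{-1}(\{0\})$; more usefully, I will track the image of a slice $F_T$ for a fixed reference value $T>t_0$, noting that $\phi(F_T)$ is a closed spacelike hypersurface sitting inside $\pi_{\R}^{-1}([T_2,\infty))$ once $T_2$ is large.

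The heart of the argument is a diameter estimate that produces a contradiction once $T_2$ is large enough. Pick $T>t_0$ and apply Lemma~\ref{le: le_4} with this $T$ and some fixed $\epsilon\in(0,T-t_0)$ to get a finite cover $F_T=\bigcup_{i=1}^{n(T,\epsilon)}V_i$ by closed $n$-balls $V_i$ with the property that any two points of $V_i$ are joined by a spacelike geodesic staying in $\pi_{\R}^{-1}((T-\epsilon,T+\epsilon))$. Now push forward by $\phi$: each $\phi(V_i)$ is a connected closed subset of $\phi(F_T)$, the images cover $\phi(F_T)$, and since $\phi$ is an isometry, any two points of $\phi(V_i)$ are joined by a spacelike geodesic lying in $\phi(\pi_{\R}^{-1}((T-\epsilon,T+\epsilon)))\subset\pi_{\R}^{-1}([T_2,\infty))$. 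Fix $T_2>t_0$. Then Corollary~\ref{co: co_3} (applied with the constant $T_2$ in the role of its ``$T$'') bounds the $d_{T_2}$-distance between the $\pi_{T_2}$-projections of the two endpoints of each such geodesic by $2C'$; hence $\dia\bigl(\pi_{T_2}(\phi(V_i))\bigr)\le 2C'$ for every $i$, with $C'$ independent of everything. Since $\pi_{T_2}\circ\phi(F_T)=F_{T_2}$ as sets (the map $\pi_{T_2}$ restricted to $\phi(F_T)$ is onto $F_{T_2}$, because every inextendible... more carefully: $\pi_{T_2}$ restricted to the hypersurface $\phi(F_T)$ need not be surjective — I will instead note that $\phi(F_T)$ together with compactness lets me reach $F_{T_2}$ through the connected sets $\pi_{T_2}(\phi(V_i))$, whose union is connected and closed), Lemma~\ref{le: le_3.5} applied in the compact connected metric space $F_{T_2}$ (or the relevant connected closed piece of it) gives
\begin{equation*}
\dia\bigl(\pi_{T_2}(\phi(F_T))\bigr)\le\sum_{i=1}^{n(T,\epsilon)}\dia\bigl(\pi_{T_2}(\phi(V_i))\bigr)\le 2C'\,n(T,\epsilon).
\end{equation*}
The right-hand side is a constant not depending on $T_2$.

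The contradiction comes from letting $T_2\to\infty$: under the growth hypothesis $(\mathrm{H})_{t_0,c}$, the intrinsic diameter of $F_{T_2}$ grows at least like $e^{2c(T_2-t_0)}\cdot\dia(F_{t_0})$ (or at any rate is unbounded, since $g_{T_2}\ge e^{2c(T_2-t_0)}g_{t_0}$ forces $d_{T_2}\ge e^{c(T_2-t_0)}d_{t_0}$ pointwise, hence $\dia(F_{T_2})\ge e^{c(T_2-t_0)}\dia(F_{t_0})$ and $\dia(F_{t_0})>0$ since $F$ is a manifold of positive dimension). I must reconcile this with the displayed bound: the set $\pi_{T_2}(\phi(F_T))$ is a connected closed subset of $F_{T_2}$, and I need it to be large. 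This is where the time-orientation and the structure of $\phi$ re-enter: $\phi$ is an isometry of $M^{1,n}$, so it maps the foliation-compatible structure around, and the key point is that $\phi(F_T)$ is a \emph{Cauchy-type} slice whose $\pi_{T_2}$-shadow is all of $F_{T_2}$ — concretely, every point of $F_{T_2}$ is connected to $\phi(F_T)$ by the integral curve of $\partial_t$, which is a timelike curve meeting the spacelike hypersurface $\phi(F_T)$; one then slides along $\phi(F_T)$. Granting that $\pi_{T_2}(\phi(F_T))=F_{T_2}$, we get $\dia(F_{T_2})\le 2C'n(T,\epsilon)$ for all large $T_2$, contradicting $\dia(F_{T_2})\ge e^{c(T_2-t_0)}\dia(F_{t_0})\to\infty$. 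This proves the lemma for $T_2$ sufficiently large.

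\textbf{Main obstacle.} The delicate point is not the diameter comparison — Lemmas~\ref{le: le_3.5}, \ref{le: le_4} and Corollary~\ref{co: co_3} are tailor-made for it — but rather controlling the position of $\phi(F_T)$: I need that $\phi$ sends the end-region $\pi_{\R}^{-1}([T-\epsilon,T+\epsilon))$ into a single end $\pi_{\R}^{-1}([T_2,\infty))$, and that the shadow $\pi_{T_2}(\phi(F_T))$ exhausts $F_{T_2}$. The first follows because $\phi(K)$ is connected, disjoint from $K$, and $\pi_{\R}\circ\phi$ has no zeros on $K$ (time-orientation preservation prevents $\phi$ from flipping the ends within a connected set); I would make this precise by showing $\pi_{\R}\circ\phi$ restricted to $\phi^{-1}$ of either end is monotone along $\partial_t$-orbits. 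The surjectivity of the shadow is the genuinely technical step and is where I expect to spend the most care; an alternative that sidesteps it is to choose the compact set $K$ slightly differently — e.g. take $K=\pi_{\R}^{-1}([-T_2,T_2])$ and argue directly that $\phi(F_{T_2})$ (rather than $\phi(F_T)$ for fixed $T$) has $\pi$-shadow of bounded diameter while itself being forced, by $\phi(K)\cap K=\emptyset$ and injectivity of $\phi$ on the compact slice, to wrap around a full slice $F_{T_3}$ with $T_3\to\infty$.
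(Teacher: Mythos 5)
Your overall strategy coincides with the paper's: cover a fixed slice $F_{T_1}$ ($T_1>t_0$) by the balls $V_i$ of Lemma~\ref{le: le_4}, push forward by $\phi$, bound $\dia(\pi_{T_2}(\phi(V_i)))$ by $2C'$ via Corollary~\ref{co: co_3}, sum with Lemma~\ref{le: le_3.5}, and contradict the exponential growth of $\dia(F_{T_2})$ forced by $(\mathrm{H})_{t_0,c}$. All of that is assembled correctly (and your observation that $\phi(\pi_{\R}^{-1}((T_1-\epsilon,T_1+\epsilon)))$ lands in a single end because it is connected and disjoint from $K$ is fine). But the step you yourself flag as the main obstacle — that the shadow $\pi_{T_2}(\phi(F_{T_1}))$ is all of $F_{T_2}$ — is a genuine gap as written, and without it the chain of inequalities bounds the diameter of a set that you have not shown to be large. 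Your heuristic, ``every point of $F_{T_2}$ is connected to $\phi(F_{T_1})$ by the integral curve of $\partial_t$, which is a timelike curve meeting the spacelike hypersurface $\phi(F_{T_1})$,'' is not a proof: a closed spacelike hypersurface in a Lorentzian manifold need not be achronal, let alone Cauchy, so there is no a priori reason an inextendible timelike curve must hit it.

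The paper closes this gap with a short but essential argument that exploits the isometry rather than the hypersurface. The fiber $\pi_{T_2}^{-1}(\{p\})=\{p\}\times\R$ is a complete timelike geodesic of $-dt^2+g_t$ (the $t$-lines are totally geodesic since $\Gamma^k_{00}=0$). Hence $\phi^{-1}(\pi_{T_2}^{-1}(\{p\}))$ is again a complete timelike geodesic $\gamma:\R\to M^{1,n}$, and for any such geodesic parametrized by arc length one has $-|\Dot{\gamma^0}(u)|^2+g_{\gamma^0(u)}(\Dot{\gamma_F}(u),\Dot{\gamma_F}(u))=-1$, so $|\Dot{\gamma^0}|\ge 1$ and $\gamma^0:\R\to\R$ is surjective. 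In particular $\phi^{-1}(\{p\}\times\R)$ crosses $F_{T_1}$, i.e.\ every fiber of $\pi_{T_2}$ meets $\phi(F_{T_1})$, which is exactly the surjectivity $\pi_{T_2}(\phi(F_{T_1}))=F_{T_2}$ you need. With this inserted, your argument is the paper's argument; without it, the contradiction does not go through.
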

\begin{proof}  
Take $T_{1} > t_{0}$ arbitrarily, and  $\epsilon >0$ so that 
$T_{1}- \epsilon >t_{0}$.  
  As  $(\rm{H}$$)_{t_{0},c}$ implies that 
$\dia (F_{T})$ has  an  at least 
 exponential growth  with respect to $T \geq t_{0}$,  
 we can find $T_{2} > T_{1}$ in such 
that $\dia (F_{T_{2}}) >  2 n(T_{1},\,\epsilon) C^{\prime}$, 
where  $n(T_{1},\,\epsilon)$ 
 and $C^{\prime}$ are the  constants appearing in 
Lemma~\ref{le: le_4} and Lemma~\ref{le: le_2}, respectively.

Suppose, on the contrary,  that 
\begin{equation*}
\phi (\pi_{\R}^{-1}([-T_{2},\,T_{2}])) \cap 
\pi_{\R}^{-1}([-T_{2},\,T_{2}]) = \emptyset
\end{equation*} 
for some isometry $\phi\in \Iso^{+} (M^{1,n})$. 
We have $\phi (F_{T_{1}}) \subset \pi_{\R}^{-1}((-\infty,\,-T_{2}] \cup [T_{2},\,\infty))$ 
as $T_{1} \in (0,\,T_{2})$. 
We consider only the case where $\phi (F_{T_{1}}) \subset 
\pi_{\R}^{-1}([T_{2},\,\infty))$, 
for the 
same argument applies to the remaining case as well. 
We should remark that, for any timelike geodesic  $\gamma:\R \rightarrow M^{1,\,n}$,     
$\gamma^{0}  $ is surjective. 
It follows that,  
for all $p \in F$, $\pi_{\R} (\phi^{-1} ({\pi_{T_{2}}}^{-1}(\{p\}))) =\R$, 
since ${\pi_{T_{2}}}^{-1}(\{p\})$ is totally geodesic. 
Hence we obtain $\pi_{T_{2}}(\phi (F_{T_{1}}))=F_{T_{2}}$.

On the other hand,    
we should notice that 
there exist $V_{i},\,i=1, 2, \ldots ,$ $n(T_{1},\,\epsilon),$ satisfying those conditions in
Lemma~\ref{le: le_4} with $T$ replaced by $T_{1}$.  
For each $i$, 
compactness of $\pi_{T_{2}}(\phi (V_{i}))$ guarantees the existence of 
  $y_{i},\,z_{i} \in V_{i}$ with 
\begin{equation*}
\dia (\pi_{T_{2}}(\phi (V_{i})))
= 
d_{T_{2}}(\pi_{T_{2}}( \phi (y_{i})),\,\pi_{T_{2}}(\phi (z_{i}))). 
\end{equation*}
By the choice of $\epsilon$, we can find a spacelike geodesic $\gamma_{i}$ from $y_{i}$ to $z_{i}$ in  
$\pi_{\R}^{-1}((t_{0},\,\infty))$. 
Since ${\gamma_{i}}^{0}  $ is concave, we have $\gamma_{i} \subset \pi_{\R}^{-1}([T_{1},\,\infty))$. 
As $\phi$ preserves time-orientation, we see that $\phi (\gamma_{i}) \subset \pi_{\R}^{-1}((T_{2},\,\infty))$. 
Corollary \ref{co: co_3} leads us to  
\begin{align*}
\dia (\pi_{T_{2}}(\phi (V_{i}))) 
= 
d_{T_{2}}(\pi_{T_{2}}(\phi (y_{i})),\,\pi_{T_{2}}(\phi (z_{i})))
\leq
2C^{\prime}. 
\end{align*}
Due to Lemma \ref{le: le_3.5}, 
we  obtain 
\begin{equation*}
\dia (\bigcup_{i=1}^{n(T_{1},\,\epsilon)} \pi_{T_{2}} (\phi (V_{i})) )
\leq 
 2 n(T_{1},\,\epsilon) C^{\prime}
< \dia (F_{T_{2}}). 
\end{equation*}
This contradicts the fact that  
$\displaystyle F_{T_{2}}=
\pi_{T_{2}}(\phi (F_{T_{1}}))=
\bigcup_{i=1}^{n(T_{1},\,\epsilon)} \pi_{T_{2}} (\phi (V_{i}))$.
\end{proof}
\section{Proof of Proposition~\ref{pr_11}.}\label{sec_11}
In this section, we show Proposition~\ref{pr_11}. 

Let $(M,\,h)$ be a Lorentzian manifold satisfying the assumptions of the proposition. 
First we prove that $(M,\,h)$ is realized as $(\R \times S,\, -dt^{2}+g_t)$, where $\{g_{t}\}_{t \in \R }$ is some 
smooth family of Riemannian metrics of $S$. 
Let $\pi :  NS \rightarrow S$ be the normal bundle over $S$. 
By  the timelike completeness, we can define the normal exponential map $\exp^{\perp}: NS \rightarrow M$.
\begin{lemma} 
The  map $\exp^{\perp}: NS \rightarrow M$ is a diffeomorphism. 
\end{lemma}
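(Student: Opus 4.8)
The plan is to show the normal exponential map is (1) a local diffeomorphism, (2) injective, and (3) surjective, and then conclude it is a global diffeomorphism. Since $S$ is spacelike, the normal bundle $NS$ is a line bundle whose fibres are timelike; by time-orientability we may trivialize it, identifying $NS$ with $S\times\R$ via $(x,t)\mapsto t\,\nu(x)$ where $\nu$ is the future-pointing unit normal field along $S$. Timelike completeness of $M$ (the non-spacelike geodesic completeness hypothesis) guarantees $\exp^{\perp}$ is defined on all of $NS$. For step (1), I would differentiate along the zero section: $\exp^{\perp}$ restricted to the zero section is the identity on $S$, and its derivative in the normal direction at $(x,0)$ is $\nu(x)$, so $d\exp^{\perp}$ is an isomorphism at every point of the zero section; at a general point $(x,t)$ one argues via Jacobi fields — a Jacobi field along the geodesic $t\mapsto\exp_x^{\perp}(t\nu(x))$ vanishing at $t=0$ with prescribed covariant derivative cannot vanish again, because condition $\nb{ii}$ (every inextendible non-spacelike geodesic meets $S$ exactly once) together with the positivity of curvature on indefinite planes forces the normal geodesics to have no focal points of $S$ along them. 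This is the step I expect to be the main obstacle: one must translate the "passes through $S$ exactly once" hypothesis and the curvature bound into the absence of focal points, using the second variation / Morse index comparison for the totally geodesic hypersurface $S$ (totally geodesic makes the shape operator vanish, so focal points coincide with conjugate points of the normal geodesic, which the sign of the curvature on the relevant timelike planes controls).

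For step (2), suppose $\exp_x^{\perp}(t\,\nu(x))=\exp_y^{\perp}(s\,\nu(y))=p$. The geodesic $u\mapsto\exp_x^{\perp}(u\,\nu(x))$ is an inextendible timelike geodesic through $p$; by $\nb{ii}$ it meets $S$ exactly once, at $u=0$. Likewise the geodesic through $y$ meets $S$ only at $u=0$. If these two geodesics were distinct, $p$ would lie on two distinct timelike geodesics each hitting $S$ once — one then derives a contradiction: run both geodesics backward from $p$; they hit $S$ at $x$ and $y$ respectively, and if $x\ne y$ the inextendible timelike geodesic segment obtained by concatenating (after reparametrizing) does not immediately give a contradiction, so instead I would argue that through $p$ there is a unique maximal inextendible timelike geodesic in each direction and use that the map $S\to M$, $x\mapsto\exp_x^{\perp}(\R\,\nu(x))$, partitions $M$ by $\nb{ii}$. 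Concretely: each point $p\in M$ lies on exactly one inextendible non-spacelike — in particular timelike — geodesic normal to $S$ at its unique crossing point, giving a well-defined inverse assignment $p\mapsto(x,t)$. This shows injectivity and simultaneously step (3), surjectivity, since by $\nb{ii}$ every point of $M$ lies on such a geodesic (take any inextendible timelike geodesic through $p$; it crosses $S$, and one checks it can be taken normal to $S$ — or rather, that the normal geodesic from its crossing point passes through $p$ — again invoking that $S$ is totally geodesic so that normal geodesics stay normal).

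Having established that $\exp^{\perp}$ is a bijective local diffeomorphism, it is automatically a global diffeomorphism, and the splitting $M\cong\R\times S$ with metric $-dt^2+g_t$ follows because the curves $t\mapsto\exp_x^{\perp}(t\nu(x))$ are unit-speed timelike geodesics orthogonal to each $F_t:=\exp^{\perp}(\{t\}\text{-level})$, so in these coordinates $h=-dt^2+g_t$ with $g_t$ the induced Riemannian metric on $F_t$, smooth in $t$ by smooth dependence of geodesics on initial data. The one genuine subtlety — beyond the focal-point argument flagged above — is verifying that the level sets $\exp^{\perp}(\{x\mapsto t\nu(x)\})$ are exactly the hypersurfaces $\{t\}\times S$ and that $t$ extends to a globally defined smooth function, which is immediate once $\exp^{\perp}$ is known to be a diffeomorphism since $t$ is just the $\R$-coordinate pulled back through $(\exp^{\perp})^{-1}$.
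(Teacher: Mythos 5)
Your step (1), the absence of focal points along normal geodesics, matches the paper's first step in substance: the paper reduces to Jacobi fields orthogonal to the normal geodesic and invokes Hermann's argument, which rests on the curvature hypothesis and on $S$ being totally geodesic. (Be careful, though, that it is the curvature condition that kills focal points, not condition (ii); a hypersurface can be met exactly once by every normal geodesic and still have focal points.) The genuine gap is in your step (2), injectivity. Condition (ii) says that every inextendible non-spacelike geodesic meets $S$ exactly once; it does \emph{not} say that through a given point $p$ there is only one timelike geodesic hitting $S$ perpendicularly. Through $p$ there is a whole cone of timelike directions, each giving a geodesic that crosses $S$ exactly once at some point, and a priori two of these could be normal to $S$ at distinct footpoints $x\neq y$ without violating (ii). Your sentence ``each point $p\in M$ lies on exactly one inextendible timelike geodesic normal to $S$ at its unique crossing point'' is precisely the uniqueness claim to be proved, and you assert it rather than prove it. This is exactly why the proposition carries the extra hypotheses (iii) and (iv), which your plan never uses: the paper considers the set $D(p)$ of past-directed timelike vectors at $p$ that exponentiate into $S$, shows $\exp_p|_{D(p)}\colon D(p)\to I_S(p)$ is a proper local diffeomorphism (properness from (iv)), hence a covering map, joins the two footpoints $\pi(v_0)$, $\pi(v_1)$ by a geodesic in $I_S(p)$ using (iii), lifts it to $D(p)$, and then applies a second-variation argument to the Lorentzian length $L^-(\gamma_s)$ of the resulting family of timelike geodesics from $p$ to $S$: concavity from the curvature sign, critical endpoints because the geodesics there are normal to $S$, hence $L^-$ is constant, and the no-focal-point statement then forces $v_0=v_1$. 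Your proposal has no substitute for this chain of reasoning.

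A smaller but still real issue is surjectivity. An arbitrary inextendible timelike geodesic through $p$ does cross $S$ by (ii), but it need not be normal to $S$ at the crossing point, and the normal geodesic issued from that crossing point need not return to $p$; so ``one checks it can be taken normal'' is not a proof. The paper handles existence by citing Kim--Kim (Proposition 3.4) for the existence of a geodesic from any point of $M-S$ meeting $S$ perpendicularly. That citation (or an equivalent maximization argument for the Lorentzian distance to $S$) is needed to close step (3).
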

\begin{proof}
First, we show that $\exp^{\perp}$ is a local diffeomorphism. 
This reduces to proving that there exists no   focal point of $S$. 
Let $\gamma$ be a timelike geodesic ray starting at some point of $S$ perpendicular to $S$. 
It is sufficient to consider Jacobi fields orthogonal to the velocity of $\gamma$. 
Then the proof follows from the same argument as used 
in Hermann~\cite{MR0152969}.  

Next we show that $\exp^{\perp}$ is a bijection.  
For any  point of $M - S$, it is suffices to find  a unique geodesic from the point  perpendicular to $S$. 
Since such a geodesic exists due to Kim--Kim~\cite[Proposition~3.\,4.]{MR1269454}, 
all that we have to do is to prove the uniqueness. 
We may assume that $\exp^{\perp}(v_{0}) =\exp^{\perp}(v_{1})$, denoted by  $p$,  for some future directed 
vectors $v_{0},\, v_{1} \in NS$.   
Let  $T_{p}^{\past}M$ be the set consisting  of past directed 
timelike tangent  vectors of $T_{p}M$. 
Put $D(p) =\{v \in  T_{p}^{\past}M\,|\, \exp_{p}(v) \in S \}$. 
By the requirements of $S$, for any  $v \in T_{p}^{\past}M$  
there  is a unique positive number $ a$ such that 
$ a v \in D(p)$. 
Since the exponential maps  $\exp_{p}$ restricted to 
both $T_{p}^{\past}M$ and its boundary 
are transverse regular on $S$, 
$D(p)$ is a hypersurface in $T_{p}^{\past}M$,  whose boundary is  
the set of the lightlike vectors $v \in T_{p}M$ satisfying that $\exp_{p}(v) \in S$. 
The  curvature condition of $M$ implies that the map $\exp_{p} |_{T_{p}^{\past}M}$ is a local diffeomorphism 
(see \cite{On}). 
Here we  recall the definition of  $I_{S}(p)$ from  the assumption of the proposition. 
We see that the restricted map $\exp_p |_{D(p)} : D(p) \rightarrow I_{S}(p)$ is proper 
by the condition~$(\rm{iv})$ of the proposition. 
It follows that the map  $\exp_p |_{D(p)}$ is a covering map. 
We can take a geodesic $\tau$ connecting $\pi (v_{0})$ and $\pi(v_{1} )$  in $I_{S}(p)$ 
by using the condition $(\rm{iii})$. 
Let $\overline{\tau}$ be a lifting of $\tau$. 
We set $\gamma_{s}(u)=\exp_{p}(u \overline{\tau} (s) )$ for  $u \in [0,\, 1]$.  
The length $L^{-}(\gamma_{s})$ of the geodesic $\gamma_{s}  $ is given by 
\begin{equation*}
L^{-}(\gamma_{s})=\int_{0}^{1} \sqrt{-h (\Dot{\gamma_{s}}(u),
\,\Dot{\gamma_{s}}(u))}\ du. 
\end{equation*}
We define by $V_{s}(u)=\partial_{s}\gamma_{s}(u)$ the variation vector field $V_{s}$ along the geodesic $\gamma_{s}$. 
Let $V_{s}^{\perp}(u)$  be the component of 
$V_{s}(u)$ perpendicular to $\Dot{\gamma_{s}}(u)$.  
We should notice that $\nabla V_{s}^{\perp}(u)$ is spacelike. 
Here $R$ stands for the curvature tensor of $M$.  
We have 
\begin{align*}
&L^{-}(\gamma_{s}) \frac{d^{2} L^{-}(\gamma_{s})}{ds^{2}} \\
&= \int_{0}^{1}   h(R(V_{s}^{\perp}(u),\,\Dot{\gamma_{s}}(u))
\Dot{\gamma_{s}}(u),
\,V_{s}^{\perp}(u))  
 -  h(\nabla V_{s}^{\perp}(u),\,\nabla V_{s}^{\perp}(u) )\, du. 
\end{align*}
Hence $L^{-}(\gamma_{s})$ is concave. 
Since the ends of the domain are critical points of $L^{-}(\gamma_{s})$,  
 $L^{-}(\gamma_{s})$ is a constant function. 
We have $v_{0} = v_{1}$ as there is no focal point of $S$. 
\end{proof}
By time-orientability of $M$, 
there exists the normal  vector field $n:S \rightarrow NS$ with $h(n(p),\,n(p))=-1$. 
The map $\phi$ is given by $\psi :  \R \times S \ni (t,\,p) \mapsto t\, n(p) \in NS$. 
According to Gauss's lemma,  
we have $(\exp^{\perp} \circ \psi)^{*}h =-dt^{2} +g_{t}$  
where $t$ is the  parameter of $\R$, and $\{g_{t}\}_{t\in \R}$ is a smooth family of Riemannian metrics of $S$. 
By  abuse of notation, we write $h$ instead of $(\exp^{\perp} \circ \psi)^{*}h$.  

Next we show 
\begin{lemma}\label{le_new}
For any spacelike geodesic $\gamma$ in 
$\pi_{\R}^{-1}((0,\ \infty))$ (resp.\  $\pi_{\R}^{-1}((- \infty,\ 0))$),     
$d^{2} \pi_{\R} (\gamma (u))/ du^{2}$ is negative (resp.\ positive). 
\end{lemma}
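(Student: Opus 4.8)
The plan is to reduce the statement to the positive- (resp.\ negative-) definiteness of $\partial_{t}g_{t}$ for $t>0$ (resp.\ $t<0$), and then to obtain that definiteness from a Jacobi-field estimate along the normal geodesics, using the curvature hypothesis of the proposition together with the fact that $S=F_{0}$ is totally geodesic.

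First I would write down the $\pi_{\R}$-component of the geodesic equation for $\gamma$. The metric has the form $-dt^{2}+g_{t}$ exactly as for $M^{1,n}$, so the Christoffel-symbol computation in the proof of Lemma~\ref{le: le_1} applies verbatim ($\Gamma^{0}_{00}=\Gamma^{0}_{i0}=\Gamma^{0}_{0j}=0$ and $\Gamma^{0}_{ij}=\tfrac12\partial_{0}h_{ij}$), giving
\[
\frac{d^{2}\pi_{\R}(\gamma(u))}{du^{2}}=-\frac12 (\partial_{t}g)_{\gamma^{0}(u)}\bigl(\Dot{\gamma_{F}}(u),\,\Dot{\gamma_{F}}(u)\bigr).
\]
Since $\gamma$ is spacelike, $\Dot{\gamma_{F}}(u)\neq 0$ for every $u$ (otherwise $\Dot{\gamma}(u)$ would be a multiple of $\partial_{t}$, hence non-spacelike), so $g_{\gamma^{0}(u)}(\Dot{\gamma_{F}}(u),\Dot{\gamma_{F}}(u))>0$ and the sign of the left-hand side is the opposite of the sign of the quadratic form $\partial_{t}g_{t}$ at $t=\gamma^{0}(u)$. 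Hence it suffices to prove that $(\partial_{t}g_{t})_{q}$ is positive definite on $T_{q}S$ for every $q\in S$ and every $t>0$, and negative definite for every $t<0$.

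The key step is this definiteness. Fix $q\in S$ and a nonzero $X\in T_{q}S$, and work along the complete timelike unit geodesic $c(t)=(t,q)$. Choosing a curve $\sigma$ in $S$ with $\sigma(0)=q$ and $\Dot{\sigma}(0)=X$, the variation $(t,s)\mapsto(t,\sigma(s))$ is a variation of $c$ through normal geodesics of $M$, so its variation field is a Jacobi field $J$ along $c$; in the product coordinates $J$ has the same (constant) components as $X$, whence $f(t):=h(J(t),J(t))=g_{t}(X,X)$. Because $S$ is totally geodesic and $h(n,n)=-1$, a standard computation gives $\nabla_{X}n=0$ along $S$; hence $\nabla_{\Dot{c}}J(0)=\nabla_{X}n=0$, so $f'(0)=2h(\nabla_{\Dot{c}}J(0),J(0))=0$, and moreover $J$ and $\nabla_{\Dot{c}}J$ stay orthogonal to $\Dot{c}$ and are therefore spacelike. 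Differentiating twice and using the Jacobi equation,
\[
f''(t)=2h(\nabla_{\Dot{c}}J(t),\nabla_{\Dot{c}}J(t))+2K(t)\,f(t),
\]
where $K(t)$ denotes the sectional curvature of the indefinite plane spanned by $J(t)$ and $\Dot{c}(t)$ (the sign comes out right because $h(J,J)h(\Dot{c},\Dot{c})-h(J,\Dot{c})^{2}=-f(t)$). By hypothesis $K(t)\geq\alpha^{2}$, while $f(t)=g_{t}(X,X)>0$ and $h(\nabla_{\Dot{c}}J,\nabla_{\Dot{c}}J)\geq 0$, so $f''(t)\geq 2\alpha^{2}f(t)>0$. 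Thus $f$ is strictly convex with $f'(0)=0$, giving $f'(t)>0$ for $t>0$ and $f'(t)<0$ for $t<0$; since $f'(t)=(\partial_{t}g_{t})_{q}(X,X)$ and $q$, $X$ were arbitrary, the required definiteness follows, and with it the lemma.

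The hard part is this key step: pinning down that the Jacobi field produced by the normal-geodesic variation computes $g_{t}(X,X)$, extracting $\nabla_{X}n=0$ (and hence $f'(0)=0$) from the totally-geodesic hypothesis on $S$, and keeping the sign conventions straight so that the lower bound $\alpha^{2}$ on the sectional curvature of the timelike plane spanned by $J$ and $\Dot{c}$ enters $f''$ with the correct sign. Once $f''>0$ and $f'(0)=0$ are established, the remainder is immediate.
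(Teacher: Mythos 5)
Your proposal is correct and follows essentially the same route as the paper: reduce to the sign of $\partial_{t}g_{t}$ via the Christoffel computation of Lemma~\ref{le: le_1}, realize $g_{t}(X,X)$ as $h(J(t),J(t))$ for the Jacobi field $J$ coming from a variation of the normal geodesic $t\mapsto(t,q)$ through normal geodesics, get $f'(0)=0$ from $S$ being totally geodesic, and use the curvature lower bound $\alpha^{2}$ in the Jacobi equation to force $f''>0$. The only difference is that the paper pushes the same differential inequality to the quantitative comparison estimate $\partial_{|u|}g_{u}(w,w)/g_{u}(w,w)\geq|\alpha\tanh(\alpha u)|$ (which it needs afterwards to verify $(\mathrm{H})_{t_{0},c}$), whereas your qualitative strict-convexity argument already suffices for the lemma as stated.
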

\begin{proof}
Since $
d^{2} \pi_{\R} (\gamma (u))/ du^{2} =- 
 (\partial_{t}g)_{\gamma^0 (u)}
(\Dot{\gamma_{F}}(u),\,\Dot{\gamma_{F}}(u))/2$ as in the proof of Lemma~\ref{le: le_1}, 
we investigate the partial derivative of $g_{t}$ with respect to $t$. 
Take a point $x \in S$ and a  non-zero tangent vector $w \in T_{x}S $ arbitrarily. The curve $\gamma^{x}$ is defined by 
$\gamma^{x}(u)=(u,\,x) \in \R \times S$ for $u \in \R$. 
Then $\gamma^{x}  $ is a geodesic in $(\R \times S,\, h)$.  
Put $Y_{w}(u)= \partial_{s}\gamma^{c(s)}(u)|_{s=0}$,   
where $c$ is a curve $c:(-\epsilon ,\, \epsilon ) \rightarrow S $ 
such that $c(0)=x$, and that $\Dot{c}(0)=w$.  
 We see that $Y_{w}  $ is a Jacobi field along the geodesic 
$\gamma^{x}  $ such that $h(Y_{w}(u),\,Y_{w}(u))= g_{u}(w,\,w)$, and that $\nabla Y_{w} (u)$ is spacelike. 
We have 
$\partial_{t}^{2} h(Y_{w}(u),\,Y_{w}(u))  \geq \alpha^{2} h(Y_{w}(u),\,Y_{w}(u))$ by the curvature condition, 
where $\alpha$ is the constant  appearing in  the proposition.
 Since $S$ is totally geodesic,  $h(Y_{w}(0),\,\nabla Y_{w}(0))=0$. 
We obtain 
\begin{equation}\label{le: le_new}
\frac{\partial_{|u|} g_{u}(w,\,w)}{g_{u}(w,\,w)} 
=
\frac{\partial_{|u|} h(Y_{w}(u),\,Y_{w}(u))}{h(Y_{w}(u),\,Y_{w}(u))} \geq |\alpha \tanh (\alpha u)|.
\end{equation}
The lemma is proved. 
\end{proof}
\begin{remark}
The inequality (\ref{le: le_new}) indicates that $\{g_{t}\}_{t\in \R}$ satisfies $(\rm{H}$$)_{t_{0},c}$. 
\end{remark}

Finally we give a simple proof of the proposition without the main theorem. 
Take any isometry $\phi$ of $M$ and $T > 0$. 
Recall that $\pi_{\R}$ is the natural projection of $\R \times S$ onto $\R$,  and that 
$K_{T}= \pi_{\R}^{-1}([-T,\,T])$. 
Suppose that $\phi (S) \cap K_{T} = \emptyset$. 
It is suffices to  consider the case where $\phi (S) \subset \pi_{\R}^{-1}((T,\ \infty))$. 
For any $p \in \phi (S)$, 
let   $\{ e_{i}(p) \}_{i=1}^n$ be  an orthonormal basis  of $T_{p}\phi (S)$. 
As $\phi (S)$ is totally geodesic, we have    
\begin{equation}\label{eq: eq_last}
\int_{ \phi (S)} \Delta_{\phi (S)}(\pi_{\R}|_{\phi (S)}(p))\ dp= \int_{\phi(S)}\sum_{i=1}^{n}
(\Hess \pi_{\R}) (e_{i}(p),\,e_{i}(p))\ dp, 
\end{equation}
where $\Hess \pi_{\R}$ is the Hessian of $\pi_{\R}$ on $(\R \times S,\, h)$, 
$\Delta_{\phi (S)}$ is 
the Laplacian on $\phi (S)$.  
By Lemma~\ref{le_new} the right hand of the equality (\ref{eq: eq_last}) is negative. 
The divergence theorem implies that the left side of the equality (\ref{eq: eq_last})  is zero. 
This is a contradiction. 
Hence for any isometry $\phi$ of $M$ 
we have $\phi (S) \cap K_{T} \neq \emptyset$. 
The proof is complete.  \hfill $\square$ 
\subsection*{Acknowledgment}
The author wishes to express his thanks to  Masahiko Kanai 
for drawing his attention to the problem and for 
many valuable comments. 
He is also grateful to  Masanori Adachi for suggesting  Lemma~\ref{le: le_3.5}.
\bibliographystyle{amsplain} 
\bibliography{reference}
\nocite{
MR1299888, MR1617862, MR0424186, Beem, Borel, Ca, Kar, Kob_2, Kob_7, MR1217161, 
Kob, Kul_2, On, Wo, Wo_2, MR522040, MR652817, MR607014, MR0147566, MR0141672, 
MR0152969, MR1269454, MR0643822}

\end{document}